     \def\section{\@startsection{section}{1}%
     \z@{.7\linespacing\@plus\linespacing}{.5\linespacing}%
     {\bfseries
     \centering
     }}
     \def\@secnumfont{\bfseries}
\newtheorem{theorem}{Theorem}[section]
\theoremstyle{definition}
\newtheorem{definition}[theorem]{Definition}
\theoremstyle{remark}
\numberwithin{equation}{section}
\DeclareMathOperator{\sgn}{sgn}
\renewcommand{\tocsection}[3]{%
  \indentlabel{\@ifnotempty{#2}{\bfseries\ignorespaces#1 #2\quad}}\bfseries#3}
\renewcommand{\tocsubsection}[3]{%
  \indentlabel{\@ifnotempty{#2}{\ignorespaces#1 #2\quad}}#3}
\newcommand\@dotsep{4.5}
\def\@tocline#1#2#3#4#5#6#7{\relax
  \ifnum #1>\c@tocdepth 
  \else
    \par \addpenalty\@secpenalty\addvspace{#2}%
    \begingroup \hyphenpenalty\@M
    \@ifempty{#4}{%
      \@tempdima\csname r@tocindent\number#1\endcsname\relax
    }{%
      \@tempdima#4\relax
    }%
    \parindent\z@ \leftskip#3\relax \advance\leftskip\@tempdima\relax
    \rightskip\@pnumwidth plus1em \parfillskip-\@pnumwidth
    #5\leavevmode\hskip-\@tempdima{#6}\nobreak
    \leaders\hbox{$\m@th\mkern \@dotsep mu\hbox{.}\mkern \@dotsep mu$}\hfill
    \nobreak
    \hbox to\@pnumwidth{\@tocpagenum{\ifnum#1=1\bfseries\fi#7}}\par
    \nobreak
    \endgroup
  \fi}
\renewcommand\csname r@tocindent0\endcsname{0pt}
\def\l@subsection{\@tocline{2}{0pt}{2.5pc}{5pc}{}}
\begin{document}

\title[Iterated Logarithm Bounds of BGC Stochastic Processes]{Iterated Logarithm Bounds of Bi-Directional Grid Constrained Stochastic Processes}

\author[Aldo Taranto, Shahjahan Khan, Ron Addie]{Aldo Taranto, Shahjahan Khan, Ron Addie}
\email{Aldo.Taranto@, Shahjahan.Khan@, Ron.Addie@, @usq.edu.au}

\subjclass[2000] {Primary 60G40; Secondary 60J60, 65R20, 60J65}

\keywords{Wiener Process, It\^{o} Processes, Law of Iterated Logarithm (LIL), Bi-Directional Grid Constrained (BGC) Stochastic Processes, Hidden Barriers}

\maketitle 











\date{\today}


\begin{abstract}
We derive a novel framework called Bi-Directional Grid Constrained (BGC) stochastic processes in which the further an It\^{o} diffusion drifts away from the origin, then the further it will be constrained.
By making suitable modifications to the Law of Iterated Logarithm (LIL), we derive a novel theorem about the upper and lower bounds for BGC processes and their hidden barrier.
To visualize the theorem, we run many simulations of the It\^{o} diffusions for a representative expression for $\lambda (X, t)$, both before and after BGC and uncover some interesting results with applications into finance and many other areas.
\end{abstract}







\bigskip
\section{Introduction}

\noindent
The problem that this paper solves is the identification of a novel class of It\^{o} diffusions and their corresponding formulation, in which the further they drift away from the origin, then the more resistence and hence constraining they undergo. 
We examine the constraining of stochastic processes by subtle perturbations rather than by the usual direct perturbations, such as through the use of hard barriers.
One of the earliest forms of subtle constraining is the Langevin equation of Physics (Langevin, 1908) \cite{Langevin1908}.
The Langevin equation is a stochastic differential equation (SDE) that describes a particular form of Brownian motion, the apparently random movement of a particle in a fluid due to its collisions with other particles in the fluid and is expressed as,

\[
\displaystyle m \, {\frac {d \mathbf{v} }{dt}} \quad = \underbrace{\strut  \boldsymbol{\eta}\left(t\right)}_{\text{stochastic term}}
  -  \underbrace{\strut \lambda \mathbf{v}}_{\text{constraining term}},
\]

\bigskip \noindent
where $m$ is the mass of the particle, $\mathbf{v}$ is its velocity and $t$ is time.
The force acting on the particle is written as a sum of a viscous force proportional to the particle's velocity (Stokes' law), and a noise term $\displaystyle{\boldsymbol{\eta }}\left(t\right)$, noting that $\mathbf{v}$ and $\boldsymbol{\eta}$ are vectors.
If the random $\displaystyle{\boldsymbol{\eta }}\left(t\right)$ term was not present, then the above equation would simply be a partial differential equation (PDE). 
The dampening term $-\lambda \mathbf{v}$ constrains or limits the movement of the particle, as shown in Figure \ref{Fig:TwoTypesofIncrementalGradientsforItoDiffusions}(b).

\bigskip \noindent
There are some unwanted complexities for our research in dealing with multiple particles, namely the interaction of every particle with its nearest neighbouring particles.
Hence, we focus on the simpler case of a 1-Dimensional It\^{o} diffusion.

\bigskip \noindent
Turning to Chemistry and Biochemistry, further more relevant examples of subtle constraining of a stochastic process include concentration gradients, in which a molecule diffuses within a medium which becomes increasingly more concentrated.
In Biology, the diffusion of particles such as nutrients and minerals, through a porous membrane can become increasingly constrained the further they pass through the membrane.
We note though that these various fields require numerous variables such as temprature, and so are too limitted for our mathematical purposes. 

\bigskip
\begin{definition}
\textbf{Bi-Directional Grid Constrained (BGC) Stochastic Process}.
A BGC stochastic process for a random variable $X$ over time $t$ is one in which the further it departs from the origin, then the further it will be constrained from above and below (bi-directionally) along that $X$ dimension.
\end{definition}

\bigskip \noindent
This BGC defnition is expressed more precisely as an SDE in (\ref{Eq:BGC}) and has been illustrated in Figure \ref{Fig:TwoTypesofIncrementalGradientsforItoDiffusions}(a).

\begin{figure}
   \centering
 \includegraphics[scale=0.7]{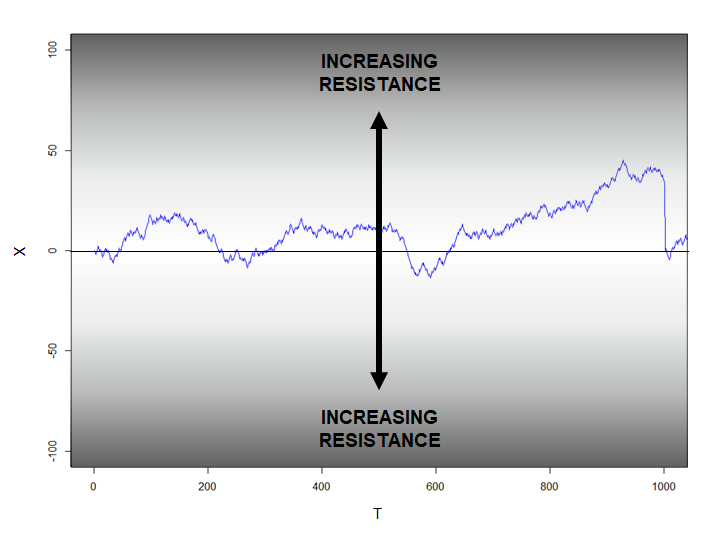}\\
\textbf{\footnotesize \noindent
(a). Vertical Gradient(s) - (Bi-Directional)}\\
   \includegraphics[scale=0.7]{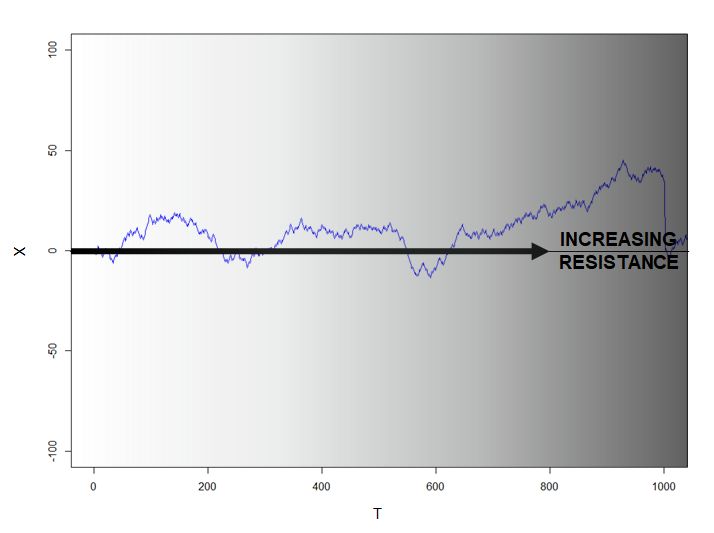}\\
\textbf{\footnotesize \noindent
(b). Horizontal Gradient - (Uni-Directional)}\\
   \caption{Two Types of Incremental Gradients for It\^{o} Diffusions}
   \label{Fig:TwoTypesofIncrementalGradientsforItoDiffusions}
\flushleft
\textbf{\footnotesize \noindent
(a). It\^{o} diffusion is constrained from above and below (bi-directionally), the more it vertically drifts away from the origin over time.\\ 
(b). It\^{o} diffusion is constrained to the right (uni-directionally), the more it horizontally drifts away from the origin over time.
}
\end{figure}

\bigskip \noindent
Before progressing further, it is important to define the use of two barriers per dimension of the It\^{o} diffusion, as shown in Figure \ref{Fig:BarrierOrientationandTypeofStochasticProcess}.

\begin{figure}
   \centering
 \includegraphics[width=\linewidth]{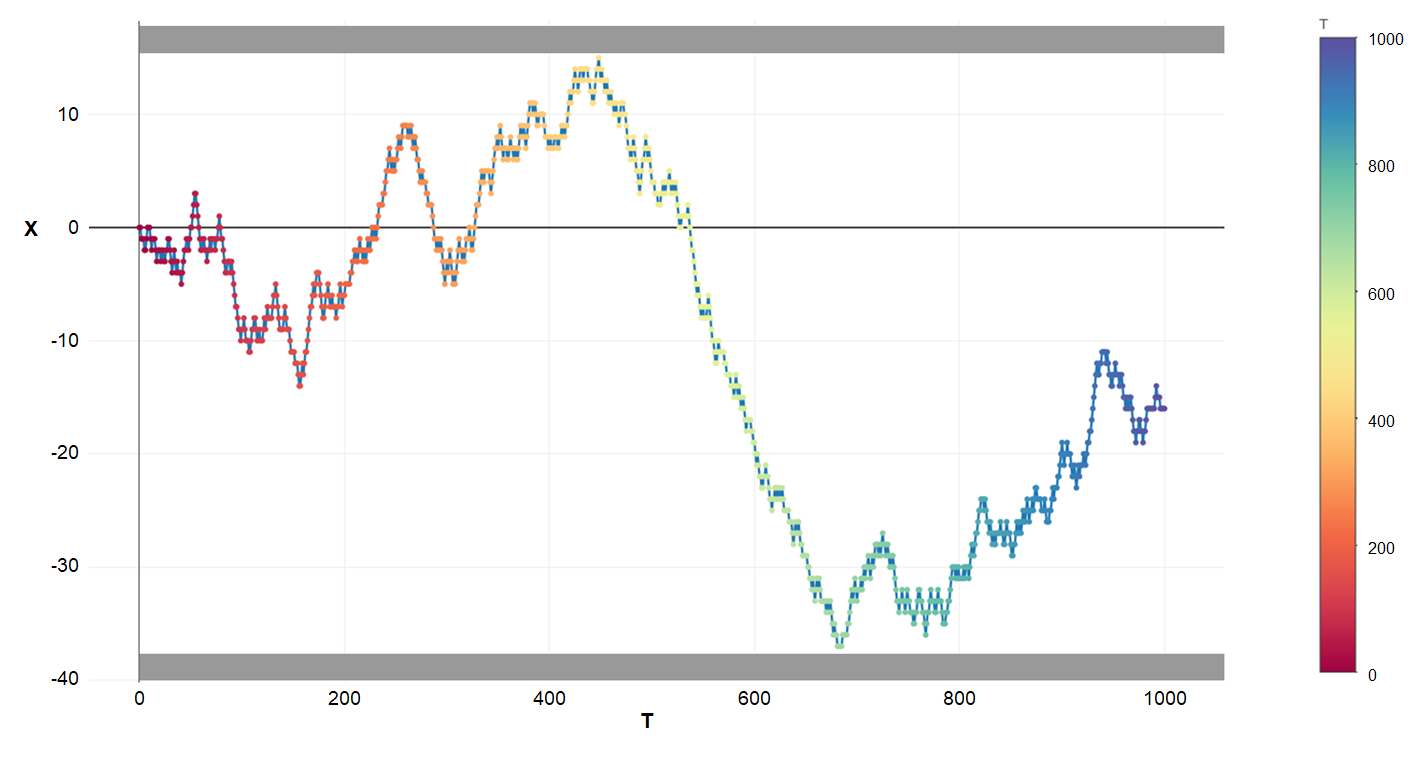}\\
\textbf{\footnotesize \noindent
(a). Horizontal Reflective Barriers}\\
$\quad$ \\
   \includegraphics[width=\linewidth]{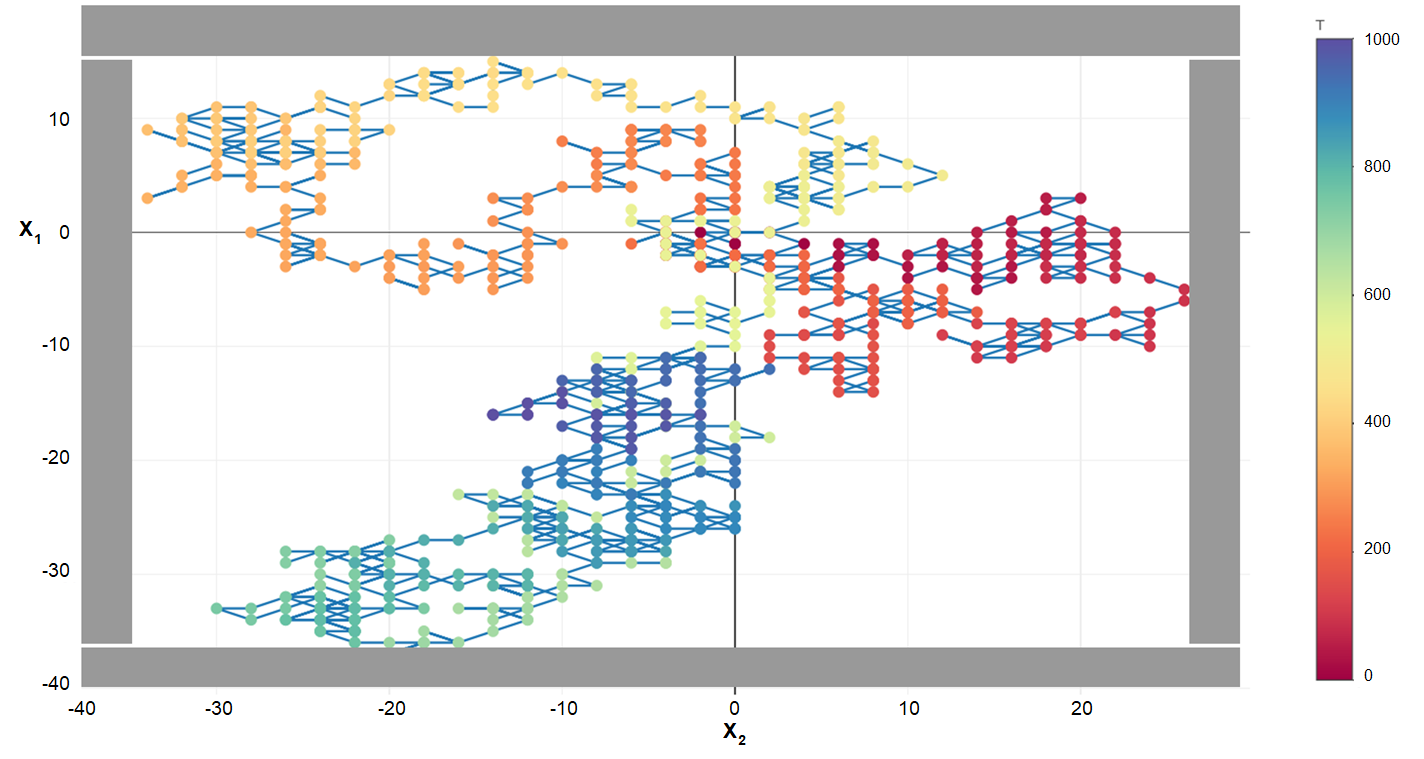}\\
\textbf{\footnotesize \noindent
(b). Horizontal \& Vertical Reflective Barriers}\\
   \caption{Barrier Orientation and Dimension of It\^{o} Diffusions}
   \label{Fig:BarrierOrientationandTypeofStochasticProcess}
\flushleft
\textbf{\footnotesize \noindent 
As the It\^{o} diffusions evolve over 1,000 time steps, we colorize each step according to its index in relation to the hot to cold colour scale.\\
(a). Colorization is not required (but still applied) as the time axis is explicit and so the evolution of the 1-Dimensional It\^{o} diffusion is clear.\\
(b). Colorization helps visualise the evolution of the 2-Dimensional It\^{o} diffusion over time, noting that we are now looking ``top down'' along the implicit time axis.}
\end{figure}

\bigskip \noindent
Figure \ref{Fig:BarrierOrientationandTypeofStochasticProcess} shows and as will become more apparent in the Methodology section that BGC is an `infinitely scalable process' (i.e. can be generalized to higher dimensions) via the use of $n$-Dimensional It\^{o} processes, but it will suffice for the purposes of this paper to revert to simply 1-Dimensional It\^{o} processes.
Whilst BGC involves more subtle barriers than these hard barriers, Figure \ref{Fig:BarrierOrientationandTypeofStochasticProcess} illustrates that two barriers are required per dimension for Bi-Directional Grid Constraining to occur, as every dimension of possible movement requires to be constrained above and below the origin.
It is also worthwhile defining the types of barriers in Figure \ref{Fig:TheThreeMainTypesofBarriers}.

\begin{figure}
   \centering
   \includegraphics[scale=0.37]{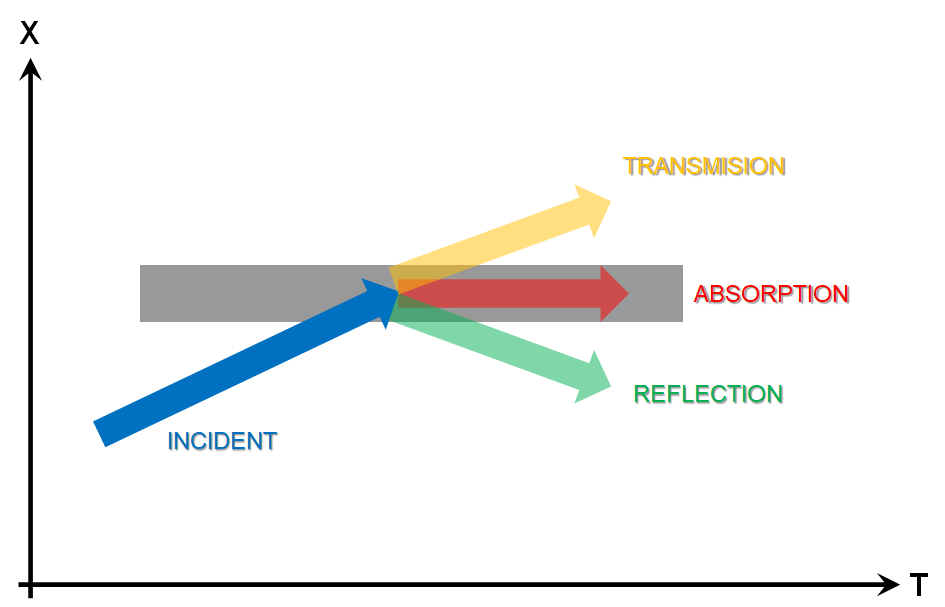}
   \includegraphics[scale=0.37]{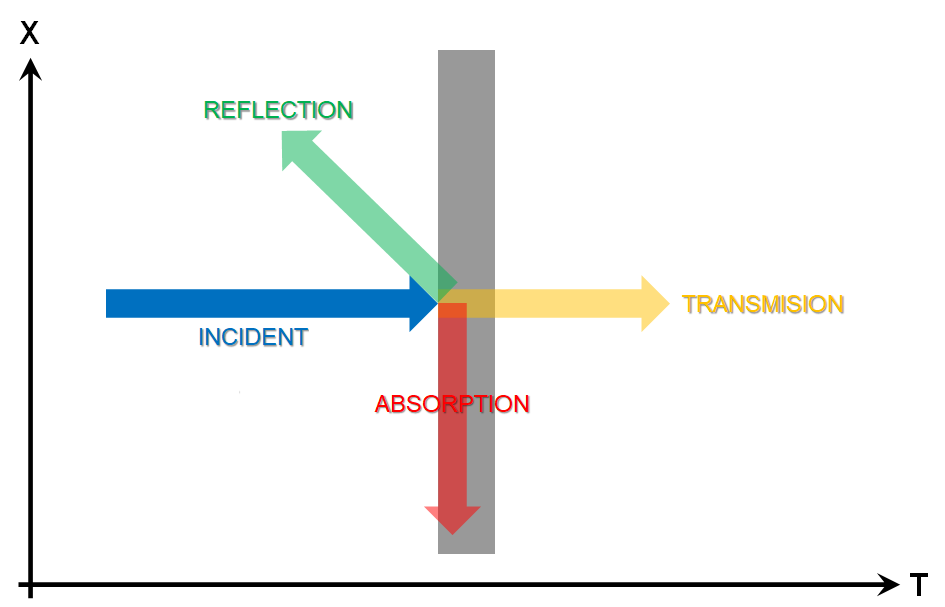}\\
\textbf{\footnotesize \noindent
(a). Horizontal Barrier \quad \quad \quad \quad \quad \quad \quad \quad  (b). Vertical Barrier
}\\
   \caption{Three Main Types of Hard Barriers and Their Two Main Orientations}
   \label{Fig:TheThreeMainTypesofBarriers}
\flushleft
\textbf{\footnotesize \noindent
(a). The 1-Dimensional It\^{o} diffusion can be used since the barrier is horizontal.\\
(b). The only way a reflection from a vertical barrier can occur is if an $\boldsymbol{n}$-Dimensional It\^{o} diffusion is used, where $\boldsymbol{n \geq 2}$ (otherwise it would be travelling backwards in time).
}
\end{figure}

\bigskip \noindent
We now begin formalizing a `first principles' approach to finding a mathematical expression for BGC, and one such approach could be to consider each equally spaced vertical level or graduation mark of Figure \ref{Fig:ARandomWalkona1DimensionalLatticeChain} to be rotated by $90^{\circ}$, to behave as a horizontal barrier in which the first barriers closest to the origin are fully transmissive barriers.
As the It\^{o} diffusion reaches the next horizontal barrier at some equally spaced interval $\Delta x$ (hence the usage of `grid' for $\mathbb{R}^{n \geq 2}$), then it becomes less transmissive and more reflective.
Ultimately, there will be one uppermost and one lowermost barrier that will be fully reflective.

\bigskip \noindent
Let $\mathbb{P}(x,t)$ be the probability that the random variable $X$ is at position $x$ at time $t$.
Then according to the rule of movement of the variable, there are only two possibilities that it reaches position $x$ at time $t +\Delta t$.
The variable was either at $x-\Delta x$ at time $\Delta t$ and jumped to the right; or the variable was at $x+\Delta x$ at time $\Delta t$ and jumped to the left as shown in Figure \ref{Fig:ARandomWalkona1DimensionalLatticeChain}.

\begin{figure}
   \centering
   \includegraphics[scale=0.58]{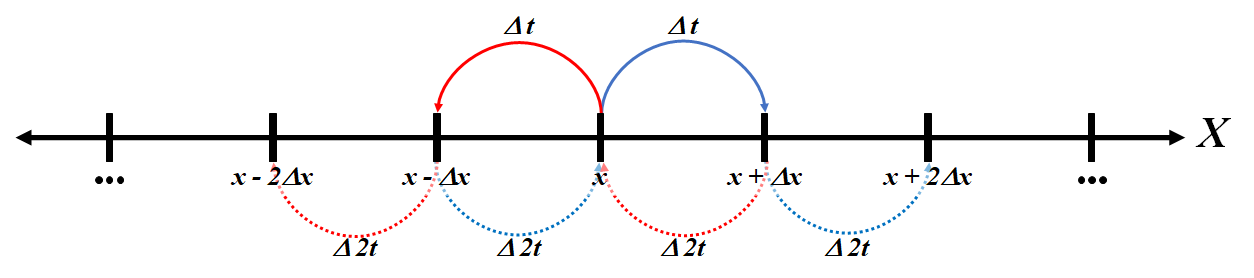}\\
   \caption{A Random Walk on a 1-Dimensional Lattice (Chain)}
   \label{Fig:ARandomWalkona1DimensionalLatticeChain}
\textbf{\footnotesize \noindent
$\boldsymbol{X}$ = Distance $\quad$ , $\quad$ $\boldsymbol{t \in [0,T]}$ = Time\\
Solid lines = $\boldsymbol{t + \Delta t}$ $\quad$ , $\quad$ Dotted lines = $\boldsymbol{t + 2 \Delta t}$}
\end{figure}

\bigskip \noindent
Since the next movement of the variable is independent of its present location, the probability that the variable is at position $x$ at time $t + \Delta t$ given that it was at
position $x - \Delta x$ at time $t$ is $\frac{1}{2} \mathbb{P}(x -\Delta x,t)$, while the probability that the variable is at position $x$ at time $t +\Delta t$ given that it was at $x+\Delta x$ at time $t$ is $\frac{1}{2}\mathbb{P}(x+\Delta x,t)$.
Thus, the probability that the variable is at a position at a point in time is given as follows for the first two cases.

\newpage
\noindent
At $t + \Delta t$:

\bigskip \noindent
\begin{equation}
 \mathbb{P}(x, t + \Delta t) = \frac{1}{2}  \mathbb{P}(x - \Delta x, t) + \frac{1}{2}  \mathbb{P}(x + \Delta x, t). \nonumber
\end{equation}

\bigskip \noindent
At $t + 2 \Delta t$:

\bigskip \noindent
\begin{eqnarray*}
 \mathbb{P}(x, t + 2 \Delta t)
  & = & \frac{1}{2}  \mathbb{P}(x - 2\Delta x, t) + \frac{1}{2}  \mathbb{P}(x + 2\Delta x, t) \\
  &     &+ \frac{1}{2}  \mathbb{P}(x - \Delta x, t) + \frac{1}{2}  \mathbb{P}(x + \Delta x, t).
\end{eqnarray*}

\bigskip \noindent
By proceeding in this manner, one can amass a series of difference equations, each for different time periods and then combine them, some having terms that cancel out, to derive the general equation.
Khantha \& Balakrishnan (1983) \cite{KhanthaBalakrishnan1983} and the references therein extend such an approach to have biassed random variables where the probability of moving to the left is not the same as the probability of moving to the right.
These authors also introduce various barriers in the framework of lattices.
However, this approach assumes that the direct constraining is due to either 100\% absorptive barriers, 100\% reflective barriers or 100\% transmissive barriers.
It would be much more difficult to formulate the subtle gradual constraining required for of BGC stochastic processes in the same lattice framework.
We thus turn to It\^{o} calculus for a more mathematically elegant approach to defining and assessing the true impact of BGC on stochastic processes.

\bigskip \noindent
When we examine numerous It\^{o} diffusions, we know that a crude approximation of the paths' bounds is $\pm \sqrt{t}$.
A far more accurate estimate for the upper and lower bounds is the Law of the Iterated Logarithm (LIL) in the formulation proved by Kolmogorov  (1929) \cite{KolmogorovLIL}, which is referenced in the Literature Review and stated in the Methodology section.
This is shown in Figure \ref{Fig:LawofIteratedLogarithmSimulated}.

\begin{figure}
   \centering
   \includegraphics[width=\linewidth]{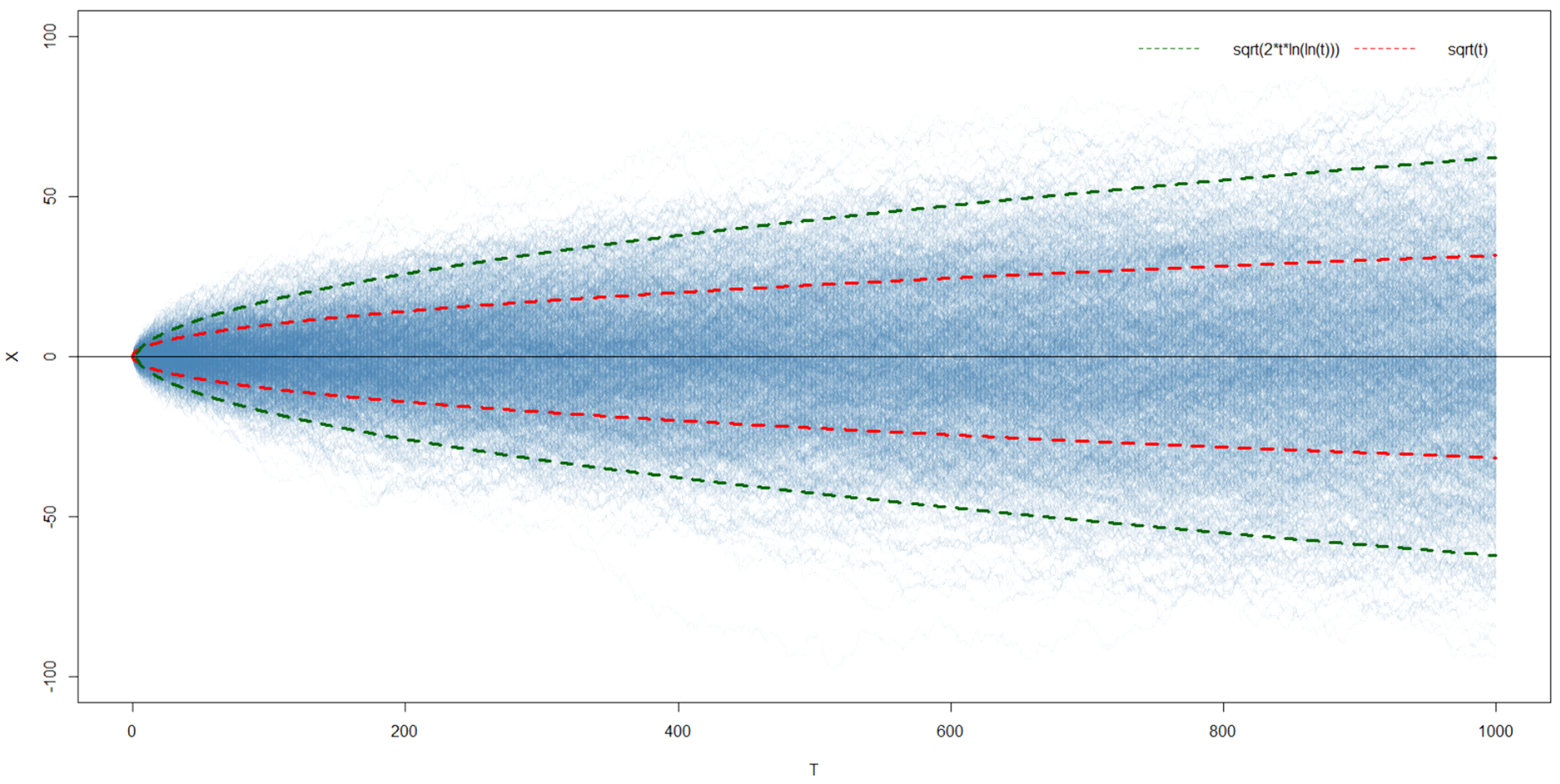}
   \caption{Law of Iterated Logarithm Simulated}
   \label{Fig:LawofIteratedLogarithmSimulated}
\textbf{\footnotesize \noindent
Green envelope = $\boldsymbol{\sqrt{2t \ln(\ln(t))}}$ $\quad$ , $\quad$ Red envelope = $\boldsymbol{\sqrt{t}}$.
}
\end{figure}

\bigskip \noindent
It is worthwhile noting that when we zoom into Figure \ref{Fig:LawofIteratedLogarithmSimulated}, the LIL is undefined in $[0,e)$ and also that the LIL soon overtakes $\sqrt{t}$, as shown in Figure \ref{Fig:LawofIteratedLogarithmSimulatedZoomed}.

\begin{figure}
   \centering
   \includegraphics[width=\linewidth]{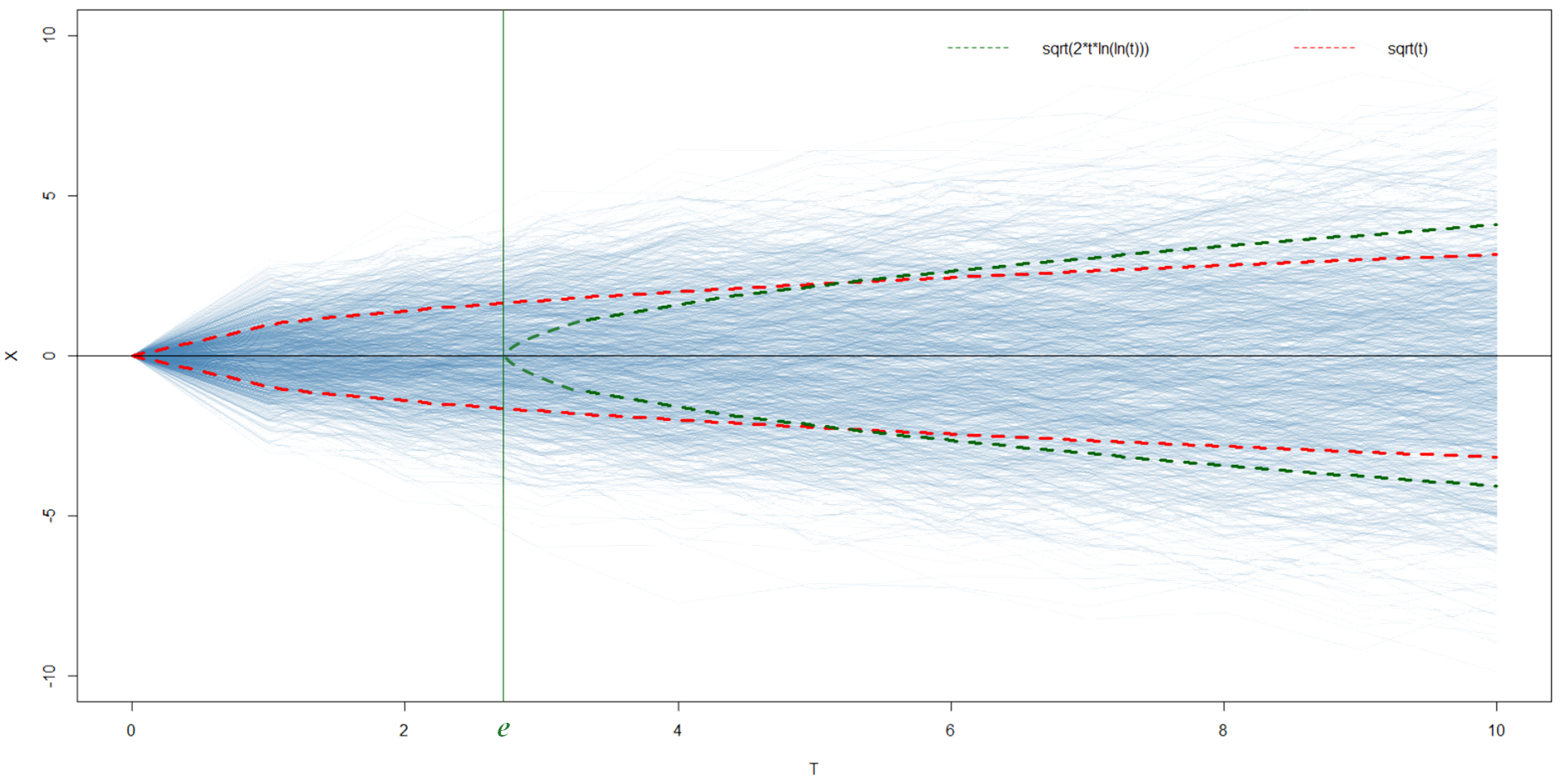}
   \caption{Law of Iterated Logarithm Simulated Zoomed}
   \label{Fig:LawofIteratedLogarithmSimulatedZoomed}
\textbf{\footnotesize \noindent
Green envelope = $\boldsymbol{\sqrt{2t \ln(\ln(t))}}$ $\quad$ , $\quad$ Red envelope = $\boldsymbol{\sqrt{t}}$.
}
\end{figure}

\bigskip \noindent
From Figure \ref{Fig:LawofIteratedLogarithmSimulatedZoomed}, the LIL can be ignored for $t \in [0,e)$ unless we make an adjustment by $e$ to result in $\sqrt{2t \ln (\ln (t))}-e$.

\bigskip \noindent
We wish to quantify the impact of BGC on the LIL for a BGC It\^{o} process but first we review the literature on this topic.
In the Methodology section, we define the SDE of BGC in (\ref{Eq:BGC}) as,

\begin{equation}
      dX = \Big( f(X,t)  -  \sgn[X, t] \lambda (X, t)  \Big) \, dt + g(X, t) \, dW_t,  \nonumber
\end{equation}

\bigskip \noindent
along with all its terms and prove the corresponding BGC theorem.
The Results \& Discussion section then adds visually beneficial simulations to further support our BGC theorem.

\bigskip
\section{Literature Review}

\noindent
The original statement of the law of the iterated logarithm (LIL) is due to Khinchin (1924) \cite{Khinchine1924} and was later restated and refined by Kolmogorov (1929) \cite{KolmogorovLIL} and is stated in theorem \ref{LawofIteratedLogarithmStated}.
Since then, there has been a tremendous amount of work on the LIL for various kinds of random sequences and for stochastic processes.
The following is a small sample of notable developments.
Hartman \& Wintner (1941) \cite{HartmanWintner1941} generalized LIL to random walks with increments with zero mean and finite variance.
Strassen (1964) \cite{Strassen} studied LIL from the point of view of invariance principles.
Feller (1969) \cite{FellerLimitTheorems} examined limit theorems for the probabilities of large deviations.
Stout (1970) \cite{Stout1970} generalized the LIL to stationary ergodic martingales.
Major (1977) \cite{MajorLIL} extended Feller's approach by proving a generalized version of Kolmogorov's LIL.
De Acosta (1983) \cite{deAcosta1983} gave a simple proof of the so-called `Hartman-Wintner' version of LIL.
Wittmann (1985) \cite{Wittmann1985} generalized the `Hartman-Wintner' version of LIL to random walks satisfying milder conditions.
Vovk (1987) \cite{Vovk1987} derived a version of LIL valid for a single chaotic sequence.
More recently,  Berkes \& Borda (2018) \cite{BerkesBorda2019} proved the law of the iterated logarithm for $\sum _{k=1}^N \exp (2\pi i n_k \alpha )$ if the gaps $n_{k+1}-n_k$ are independent and identically distributed (iid) random variables.
Krebs (2020) \cite{Krebs2020} examined the LIL and related strong invariance principles for functionals in stochastic geometry.

\bigskip \noindent
Despite these deep results in the application of real analysis to random variables, there was a gap in the literature when it comes to the application of LIL to to It\^{o} diffusions until the 1950s.
Tanaka (1958) \cite{Tanaka1958} was one of the first to extend the LIL to one-dimensional diffusion processes.
Motoo (1959) \cite{Motoo1959} then provided a proof of the LIL through the use of the diffusion equation.
Mishra \& Acharya (1983) \cite{MishraAcharya1983} introduced normalization in the LIL of diffusions.
Kawazu {\it et. al.} (1989) \cite{KawazuTamuraTanaka1989} extended the limit theorems for the asymptotic extremes of diffusions.
Mao (2008) \cite{Mao2008} simplified the previous research on LIL for diffusions and defined the paremeters $\rho$ and $L$ for a proof that extends to $n$-Dimensional diffusions.
Finally, Appleby \& Wu (2009) \cite{ApplebyWu2009} provide a modern synthesis of the main research in LIL for diffusions by examining the solutions of SDEs that obey the LIL.
They also provide application of these bounds to financial markets. 
Appleby \& Appleby-Wu (2013) \cite{ApplebyApplebyWu2013} then also examined recurrent solutions these SDEs that obey the LIL.

\bigskip \noindent
Having reviewed the literature of the LIL, the Methodology section formulates the novel impact that BGC has on the unconstrained LIL.

\bigskip
\section{Methodology}

\bigskip \noindent
\begin{theorem}\label{LawofIteratedLogarithmStated}
\textbf{(Kolmogorov's Law of Iterated Logarithm (LIL) \cite{KolmogorovLIL})}.
Let $X_1, X_2, \dots$ be independent and identically distributed (iid) random variables where $\mathbb{E}(X_i) = 0$, $\mathbb{E}(X^{2}_{i}) = \sigma^{2}_{i}$, $\forall i \in \mathbb{N}$.
Define $S_n = \sum^{n}_{i=1} X_i$, $B_n = \sum^{n}_{i=1} \sigma^{2}_{i}$, $\forall i,n \in \mathbb{N}$ and let $B_n \rightarrow \infty$.
Assume the existence of a numerical sequence $M_n$, $\forall i \in \mathbb{N}$ such that,

\[
  M_n = \mathcal{O} \Bigg( \sqrt{ \frac{B_n}{\ln(\ln(B_n))} } \Bigg)
\quad , \quad
 \mathbb{P}\Big(|X_n| \leq M_n \Big) = 1.
\]

\bigskip \noindent
Then the following relation is true,

\[
 \mathbb{P} \Bigg( \limsup\limits_{n \rightarrow \infty} \Bigg\{ \frac{S_n}{\sqrt{2B_n \ln(\ln(B_n))}} \Bigg\} = 1 \Bigg) = 1.
\]  
\hfill    $\blacksquare$
\end{theorem}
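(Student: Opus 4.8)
The plan is to prove the two one-sided statements separately: the \emph{upper half}, $\limsup_n S_n/\varphi(B_n)\le 1$ almost surely, and the \emph{lower half}, $\limsup_n S_n/\varphi(B_n)\ge 1$ almost surely, where I write $\varphi(B)=\sqrt{2B\ln\ln B}$ for the LIL normalizer. The single analytic engine behind both halves is a pair of two-sided exponential (Bernstein--Kolmogorov) tail estimates for sums of bounded independent mean-zero random variables: under $|X_i|\le M_n$ one has, for $x$ in a suitable window,
\[
\exp\!\Big(-\tfrac{x^2}{2}\big(1+\varepsilon(x)\big)\Big)\ \le\ \mathbb{P}\big(S_n > x\sqrt{B_n}\big)\ \le\ \exp\!\Big(-\tfrac{x^2}{2}\big(1-\varepsilon(x)\big)\Big),
\]
where the relative error $\varepsilon(x)$ is controlled by $x\,M_n/\sqrt{B_n}$. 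The hypothesis $M_n=\mathcal{O}\big(\sqrt{B_n/\ln\ln B_n}\big)$ is precisely what forces $\varepsilon(x)\to 0$ when $x$ is of the critical order $\sqrt{\ln\ln B_n}$ dictated by the LIL; it also makes truncation unnecessary (the variables are already bounded at the right scale) and guarantees $\sigma_n^2=B_n-B_{n-1}=o(B_n)$, hence $B_{n+1}/B_n\to 1$, which is what later lets us pass from geometric subsequences of $B_n$ back to all $n$.

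For the upper half I would fix $\theta>1$ and pick indices $n_k$ with $B_{n_k}\approx\theta^{k}$ (possible since $B_n\uparrow\infty$ with $B_{n+1}/B_n\to1$). On each block $\{n:B_{n_k}\le B_n<B_{n_{k+1}}\}$, a maximal inequality (Kolmogorov, or Lévy in the symmetric case) bounds $\max_{n}S_n$ by the tail of $S_{n_{k+1}}$; then the upper exponential estimate with $x\approx(1+\delta)\sqrt{2\ln\ln B_{n_k}}$ gives
\[
\mathbb{P}\Big(\max_{n\le n_{k+1}} S_n > (1+\delta)\varphi(B_{n_k})\Big)\ \le\ C\,(k\ln\theta)^{-(1+\delta)^2/\theta}.
\]
Choosing $\theta$ close enough to $1$ that $(1+\delta)^2/\theta>1$ makes the right side summable in $k$, so by the first Borel--Cantelli lemma the events occur only finitely often; letting $\delta\downarrow0$ and $\theta\downarrow1$ yields $\limsup_n S_n/\varphi(B_n)\le 1$ a.s.

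For the lower half, independence is essential. Along a \emph{sparse} subsequence $m_k$ with $B_{m_k}\approx\theta^{k}$ for a \emph{large} $\theta$, the increments $D_k=S_{m_{k+1}}-S_{m_k}$ are independent with variance $\approx(1-\theta^{-1})B_{m_{k+1}}$; the \emph{lower} exponential estimate with $x\approx(1-\delta)\sqrt{2\ln\ln B_{m_{k+1}}}$ gives $\sum_k\mathbb{P}\big(D_k>(1-\delta)\varphi(B_{m_{k+1}})\big)=\infty$, so by the second Borel--Cantelli lemma $D_k>(1-\delta)\varphi(B_{m_{k+1}})$ infinitely often a.s. Applying the already-proved upper half to the complementary piece $S_{m_k}$ (which is $\ge-2\varphi(B_{m_k})$ eventually, with $B_{m_k}$ a factor $\theta$ smaller) and summing, one gets $S_{m_{k+1}}>\big(1-c\,\theta^{-1/2}-\delta\big)\varphi(B_{m_{k+1}})$ infinitely often; taking $\theta$ large and $\delta$ small gives $\limsup_n S_n/\varphi(B_n)\ge 1$ a.s., and combining the two halves proves the theorem. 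The hard part will be the sharpness bookkeeping in the exponential inequalities: one must check that, with the given $M_n$ growth, the multiplicative errors in \emph{both} the upper and the lower tails are uniformly $o(1)$ across the relevant $x$-window, and that neither the maximal inequality nor the block decomposition erodes the constant $1$ — and throughout, every subsequence and every Borel--Cantelli step must be phrased with $B_n$, not $n$, as the clock.
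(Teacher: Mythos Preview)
Your sketch is the classical Kolmogorov argument and is essentially correct: the two-sided exponential (Bernstein--Kolmogorov) bounds, geometric blocking in the $B_n$-clock, Kolmogorov's maximal inequality for the upper half, and the sparse-block independence argument with the second Borel--Cantelli lemma for the lower half are exactly the ingredients of the original 1929 proof, and your observation that the hypothesis $M_n=\mathcal{O}\big(\sqrt{B_n/\ln\ln B_n}\big)$ both forces the relative error in the exponential bounds to vanish at the critical scale and yields $\sigma_n^2\le M_n^2=o(B_n)$ (hence $B_{n+1}/B_n\to1$) is on the mark.

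However, there is nothing in the paper to compare against: the paper does not supply its own proof of this theorem. It states the result, places a $\blacksquare$, and in the very next paragraph refers the reader to Kolmogorov's original article for the argument. (Incidentally, that article, ``\"Uber das Gesetz des iterierten Logarithmus'', \emph{Math.\ Ann.}\ 101 (1929), is in German, not Russian as the paper asserts.) So your proposal is not an alternative to the paper's proof; it \emph{is} the proof the paper is outsourcing. If anything, you have been more careful than the paper's statement, which calls the $X_i$ ``iid'' yet assigns them individual variances $\sigma_i^2$ and imposes the Kolmogorov boundedness condition; your write-up correctly treats the $X_i$ as merely independent.
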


\bigskip \noindent
The interested reader is invited to see the proof of this theorem, in its original Russian wording (Kolmogorov, 1929 \cite{KolmogorovLIL}).
The denominator of this infinite sequence is the envelope bounds as seen in Figure \ref{Fig:LawofIteratedLogarithmSimulated}, whereas the numerator is of the random sequence $S_n$ itself.
The fact that the ratio approaches unity indicates that $S_n$ is bounded above and below by the envelope. 
Such sequences are useful for Markov chains, martingales, L\'{e}vy processes and other theoretical topics regarding the asymptotic nature of random series.
Instead, we wish to focus on the LIL for It\^{o} processes, especially under BGC.
We also use the natural logarithm $\ln(x)$ notation rather than the traditional $\log(x)$ used in the literature because $\log(x)=\log_{\kappa} (x)$ is usually reserved for $\kappa$=2 or 10, not for $\kappa=e$, which is what the literature is referring to.

\bigskip \noindent
Standard Brownian Motion (SBM) is defined as $dX = f (X, t) \, dt + g (X, t)  \, dB_t$, and Geometric Brownian Motion (GBM) is defined as $dX = f (X) \, dt + g (X)  \, dB_t$, where $B_t$ is a Brownian process.
However, Brownian motion does not have a drift term $f(X, t)$ but only a diffusion term $g(X, t)$ and we prefer to use the more mathematically precise Wiener process $W_t$ definition instead of $B_t$.
We thus define the following mutually exclusive terminology, in decreasing order of complexity.

\bigskip \noindent
\textbf{Standard It\^{o} Diffusion:}
\begin{equation}\label{Eq3_4}
dX = f (X, t) \, dt + g (X, t)  \, dW_t, \quad t \geq 0,
\end{equation}

\bigskip \noindent
\textbf{Geometric It\^{o} Diffusion:}
\begin{equation}\label{Eq3_2}
dX = f (X) \, dt + g (X)  \, dW_t, \quad t \geq 0,
\end{equation}

\bigskip \noindent
\textbf{Geometric Brownian Motion / Geometric Wiener Process:}
\begin{equation}\label{Eq3_1}
dX =  g \, dW_t, \quad t \geq 0, \quad \text{i.e. $g (X, t) \Rightarrow g \in \mathbb{R}$,}
\end{equation}

\bigskip \noindent
\textbf{Standard Brownian Motion / (Standard) Wiener Process:}
\begin{equation}\label{Eq3_3}
dX = dW_t, \quad t \geq 0, \quad \text{i.e. $g (X, t) \Rightarrow 1$,}
\end{equation}

\bigskip \noindent
where, $f(x): \mathbb{R} \rightarrow \mathbb{R}$, $g(x): \mathbb{R} \rightarrow \mathbb{R}$, $\forall x \in \mathbb{R}$ and,

\bigskip 
\begin{equation}\label{limits}
   \lim_{x \rightarrow \infty} f(x) \rightarrow \mu \quad , \quad \lim_{x \rightarrow \infty} g(x) \rightarrow \sigma.
\end{equation}

\bigskip \noindent
Depending on whether the generalized $f(x)$ and $g(x)$ are used, or whether the simplified $\mu$ and $\sigma$ are used, then the resulting theorems will either have more complexity, or less complexity, respectively.
Before presenting these theorems, we list some of the relationships between $f(x)$ and $g(x)$ that are examined in Appleby \& Wu (2009), 

\bigskip
\begin{equation}
f(x) = \mathcal{O} \Big(g^{-1}(x) \Big)  \Longleftrightarrow \limsup_{t \rightarrow \infty} \big| f(t) \big| \big| g(t) \big| < \infty,
\end{equation}

\bigskip \noindent
and $f(x)$ and $g(x)$ are further related by the scale function $s_c (x)$ and the speed measure $m(dx)$ of the SDE defined by,

\bigskip
\begin{equation}\label{Eq2_1}
 \resizebox{0.9\hsize}{!}{$
s_c (x) = \displaystyle \int^{x}_{c} \exp \Bigg( -2 \int^{y}_{c} \frac{f(z)}{g^2 (z)} \, dz \Bigg) \, dy, \quad
 m(dx) = \frac{2}{s' (x) g^2 (x)} \, dx,
$}
\end{equation}

\bigskip \noindent
$c,x \in I:=(l,r)$ respectively, where $I$ is the state space of the process.
The next theorem can now address the `geometric' It\^{o} diffusions of (\ref{Eq3_2}).

\bigskip
\begin{theorem} (Motoo, 1959). Let $X$ be the unique continuous real-valued process satisfying the autonomous SDE as defined in (\ref{Eq3_2}) with $X(0) = x_0$.
Let $s$ and $m$ be the scale function and speed measure of $X$ as defined in (\ref{Eq2_1}), and let $h : (0,\infty) \rightarrow (0,\infty)$ be an increasing function with $h(t) \rightarrow \infty$ as $t \rightarrow \infty$.
If $X$ is recurrent on $(l,\infty)$ (or $[l,\infty)$ in the case when $l$ is an instantaneous reflecting point) and $m(l,\infty) < \infty$, then,

\bigskip 
\[
  \mathbb{P} \Bigg[ \limsup_{t \rightarrow \infty} \frac{|W(t)|}{\sqrt{2t \ \ln(\ln(t))}} \geq 1 \Bigg] = 1 \text{ or } 0,
\]

\bigskip \noindent
depending on whether,

\[
 \int^{\infty}_{t_0} \frac{1}{s(h(t))} \, dt = \infty, \text{ or }  \int^{\infty}_{t_0} \frac{1}{s(h(t))} \, dt < \infty, \text{ respectively},
\]

\bigskip \noindent
for some  $t_0 >0$.  \hfill    $\blacksquare$
\end{theorem}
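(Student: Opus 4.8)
The plan is to establish this in the form it is usually stated and from which the displayed inequality is read off: with $X$ the diffusion (\ref{Eq3_2}), $s$ and $m$ as in (\ref{Eq2_1}), and $h$ increasing with $h(t)\to\infty$, one must show that under recurrence of $X$ on $(l,\infty)$ (or $[l,\infty)$ when $l$ is instantaneously reflecting) together with $m(l,\infty)<\infty$,
\[
\mathbb{P}\bigl[\,X_t\ge h(t)\ \text{for arbitrarily large }t\,\bigr]=1\ \text{ or }\ 0
\]
according as $\int_{t_0}^{\infty}\!\tfrac{dt}{s(h(t))}$ diverges or converges, the displayed $\limsup$ being the shape this takes once $h$ is specialized to $\sqrt{2t\ln\ln t}$, the passage between the two forms being the routine squeeze $h\mapsto(1\pm\varepsilon)h$, and the two-sided envelope $|X_t|$ being obtained by running what follows on $X$ and on $-X$. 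First I would note that $\{X_t\ge h(t)\text{ i.o.}\}$ lies in the tail $\sigma$-field $\bigcap_{T>0}\sigma(X_u:u\ge T)$, so recurrence and the Markov property already force its probability to be $0$ or $1$ (and independent of $x_0$), which is why only those two values occur. Then I would pass to natural scale: setting $Y_t=s(X_t)$ gives a diffusion in its own natural scale with speed measure $\widetilde m=m\circ s^{-1}$ of finite total mass $\widetilde m(s(I))=m(l,\infty)<\infty$; recurrence forces $s(\infty)=+\infty$ (and $s(l{+})=-\infty$, or $l$ reflecting), and since $s$ is strictly increasing $\{X_t\ge h(t)\text{ i.o.}\}=\{Y_t\ge s(h(t))\text{ i.o.}\}$. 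So it suffices to prove the test for a recurrent natural-scale diffusion $Y$ with finite speed measure and the increasing function $\widetilde h:=s\circ h$.

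The engine of the proof is an excursion/hitting-rate estimate. Fix an interior reference level $a$ and a level $b<a$ (or the reflecting endpoint) that $Y$ revisits infinitely often. Because $Y$ is in natural scale, the gambler's-ruin identity gives, for large $y$,
\[
\mathbb{P}_{a}\bigl[\,Y\text{ hits }y\text{ before returning down to }b\,\bigr]=\frac{a-b}{y-b}\ \sim\ \frac{a-b}{y},
\]
while finiteness of the speed measure is exactly positive recurrence, so the mean duration $\mu<\infty$ of an excursion of $Y$ away from $a$ is finite and the one-dimensional marginals of $Y$ are tight. A renewal/strong-Markov argument then yields the estimate I regard as the crux: there are constants $0<c_1\le c_2<\infty$ such that
\[
c_1\,\frac{L}{y}\ \le\ \mathbb{P}_{\xi}\!\Bigl[\,\sup_{0\le t\le L}Y_t\ge y\,\Bigr]\ \le\ c_2\,\frac{L}{y}
\]
for all large $L,y$ with $L/y$ small, uniformly over starting points $\xi$ in any fixed compact set — the picture being that over a time interval of length $L$ the process makes about $L/\mu$ near-independent attempts to climb, each succeeding with probability of order $1/y$.

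With this in hand I would run a blockwise Borel--Cantelli argument. Split $[t_0,\infty)=\bigcup_k I_k$ with $I_k=[t_k,t_{k+1}]$, choosing the breakpoints adaptively so that $t_{k+1}\ge 2t_k$ and $\widetilde h(t_{k+1})\le 2\widetilde h(t_k)$ (inserting extra breakpoints at the at most countably many jumps of $h$); monotonicity of $\widetilde h$ then sandwiches $\{Y_t\ge\widetilde h(t)\text{ i.o.}\}$ between $\{E_k\text{ i.o.}\}$ and $\{E'_k\text{ i.o.}\}$, where $E_k=\{\sup_{I_k}Y\ge\widetilde h(t_{k+1})\}$ and $E'_k=\{\sup_{I_k}Y\ge\widetilde h(t_k)\}$, and an elementary comparison shows that $\sum_k|I_k|/\widetilde h(t_k)$, $\sum_k\mathbb{P}(E_k)$ and $\sum_k\mathbb{P}(E'_k)$ all converge or all diverge together with $\int^{\infty}dt/\widetilde h(t)$. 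If that integral converges, ordinary Borel--Cantelli applied to $(E'_k)$ gives $\mathbb{P}[Y_t\ge\widetilde h(t)\text{ i.o.}]=0$. If it diverges, the $E_k$ are only approximately independent, so I would invoke the conditional (L\'evy) Borel--Cantelli lemma with the filtration $\mathcal{F}_{t_k}$: by the strong Markov property $\mathbb{P}(E_k\mid\mathcal{F}_{t_k})$ is the hitting probability above started from $Y_{t_k}$, and by tightness it is bounded below by a fixed multiple of $|I_k|/\widetilde h(t_{k+1})$ on an event of probability bounded below, so $\sum_k\mathbb{P}(E_k\mid\mathcal{F}_{t_k})=\infty$ a.s.\ and hence $\mathbb{P}(E_k\text{ i.o.})=1$. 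Undoing $\widetilde h=s\circ h$ then gives the stated integral test.

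The step I expect to be the genuine obstacle is the uniform two-sided hitting estimate of the middle paragraph: showing $\mathbb{P}_{\xi}[\sup_{[0,L]}Y\ge y]$ is of exact order $L/y$ with constants independent of the starting point $\xi$ in a fixed compact set, and in a shape robust enough to drive the conditional Borel--Cantelli lemma. This is precisely where $m(l,\infty)<\infty$ is indispensable: it is what makes the diffusion positive recurrent, supplies a finite mean excursion length, and forces tightness of the marginals; drop it and the ``rate of order $1/y$ per unit time'' picture collapses. A lesser but real nuisance is the bookkeeping of the third paragraph — since $h$ is only assumed increasing it may be highly irregular or even discontinuous, so the adaptive choice of blocks and the verification that the block sums really do track $\int^{\infty}dt/s(h(t))$ must be done with some care, even though each individual step there is standard.
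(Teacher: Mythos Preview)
The paper does not prove this theorem. It is stated as a cited result attributed to Motoo (1959) \cite{Motoo1959}; the $\blacksquare$ appearing immediately after the statement marks the end of the quoted theorem, not the end of a proof. There is therefore no proof in the paper against which to compare your proposal.

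That said, your sketch is a faithful outline of how Motoo's integral test is actually established in the original source and its modern treatments: reduce to natural scale via $Y=s(X)$, use finiteness of the speed measure to obtain positive recurrence and hence a finite mean cycle length, derive a two-sided hitting estimate of order $L/y$ for $\sup_{[0,L]}Y$, and then run a blockwise (conditional) Borel--Cantelli argument on adaptively chosen time intervals. Your identification of the uniform hitting-rate bound as the crux, and of $m(l,\infty)<\infty$ as the hypothesis that powers it, is correct. One cosmetic point: the displayed $\limsup$ in the paper's statement is written with $|W(t)|$ rather than $|X(t)|$; you have (rightly) treated the theorem as a statement about the diffusion $X$ and its envelope $h$, which is what Motoo's result actually says --- the $W$ in the paper appears to be a transcription slip, and your reading is the intended one.
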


\bigskip \noindent
Mao (2008) extends Motoo's work by defining the terms $\rho$ and $K$ as,

\bigskip
\[
xf(x) < \rho, \quad \big| \big|  g(x, t)  \big| \big| \leq K,
\]

\bigskip \noindent
where by `$\leq$', it is meant that {\it most} $X(t)$ paths will not exceed these bounds, but some will as shown in Figure \ref{Fig:LawofIteratedLogarithmSimulated}.
Mao then proves the corresponding LIL,

\bigskip
\[
\limsup_{t \rightarrow \infty} \frac{|X(t)|}{\sqrt{2t \, \ln(\ln(t))}} = K \sqrt{e} \quad a. s.,
\] 

\bigskip \noindent
with,

\bigskip 
\[
\lim_{x \rightarrow - \infty} xf(x) = L_{- \infty} > \frac{\sigma^2}{2},
\]

\bigskip \noindent
and,

\bigskip 
\[
\lim_{x \rightarrow \infty} \frac{xf(x)}{g^2 (x)} = L_{ \infty} > \frac{1}{2}.
\]

\bigskip \noindent
Note that $L_{\infty} < \infty$ as $x \rightarrow \infty$ when $f(x)$ and $g(x)$ are regularly varying at infinity.
Mao's findings add greater understanding on the relationship between $f(x)$ and $g(x)$ but it is not an exact bound(s), and so the following theorem addresses the `geometric' It\^{o} diffusions of (\ref{Eq3_2}) more precisely than Motoo's theorem.

\bigskip
\begin{theorem} (Appleby \& Wu, 2009, p.928).
Let $X$ be the unique continuous adapted process which obeys (\ref{Eq3_2}).
Let $\Omega := \{ \omega : \lim_{ t \rightarrow \infty} X(t,\omega) = \infty \}$.
If,

\bigskip
\begin{equation}\label{Eq:4.8}
   \lim_{x \rightarrow \infty} x f(x) = L_{\infty}, \quad g(x) = \sigma,
\end{equation}

\bigskip \noindent
where $x \in \mathbb{R}$, $\sigma \ne 0$ and $L_{\infty} > \sigma^2 / 2$, then $\mathbb{P}[\Omega] > 0$ and $X$ satisfies,

\bigskip
\begin{equation}\label{Eq:4.9}
   \limsup_{t \rightarrow \infty} \frac{|X(t)|}{\sqrt{2t \, \ln(\ln(t))}} = |\sigma|  \quad a. s. \text{ on } \Omega,
\end{equation}

\bigskip \noindent
and,

\bigskip
\begin{equation}\label{Eq:4.10}
   \liminf_{t \rightarrow \infty} \frac{\ln \Big( \frac{X(t)}{\sqrt{t}} \Big)}{\ln(\ln(t))} = - \frac{1}{\frac{2 L_{\infty}}{\sigma^2}-1}  \quad a. s. \text{ on } \Omega.
\end{equation} \hfill    $\blacksquare$
\end{theorem}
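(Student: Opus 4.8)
\bigskip \noindent
\textbf{Proof strategy.}
The plan is to exploit the two structural features of the hypothesis (\ref{Eq:4.8}): the diffusion coefficient is the \emph{constant} $\sigma$, which makes pathwise comparison of solutions available, and $x f(x) \rightarrow L_{\infty}$, which says that for large $x$ the drift is asymptotically $L_{\infty}/x$. Consequently, on the event $\Omega$ (where $X$ eventually exceeds every level) $X$ is a small perturbation of $\sigma$ times a Bessel process of dimension $\delta := 2 L_{\infty}/\sigma^{2} + 1$, and the standing assumption $L_{\infty} > \sigma^{2}/2$ is exactly the transience threshold $\delta > 2$. This is what will produce both the positivity of $\mathbb{P}[\Omega]$ and the constant $2 L_{\infty}/\sigma^{2} - 1 = \delta - 2$ appearing in (\ref{Eq:4.10}).

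\bigskip \noindent
\textbf{Step 1: $\mathbb{P}[\Omega] > 0$.}
First I would compute the scale function $s = s_{c}$ of (\ref{Eq2_1}) with $g \equiv \sigma$: since $f(z)/g^{2}(z) = (L_{\infty} + o(1))/(\sigma^{2} z)$, the scale density $\exp(-2 \int_{c}^{y} f(z)/\sigma^{2} \, dz)$ decays like $y^{-2 L_{\infty}/\sigma^{2} + o(1)}$, which is integrable at $+\infty$ precisely because $2 L_{\infty}/\sigma^{2} > 1$; hence $s(+\infty) < \infty$. Since the process in natural scale $Y := s(X)$ is then a local martingale bounded above by $s(+\infty)$, it converges almost surely, and the standard scale-function boundary classification for one-dimensional diffusions shows that the right endpoint is attracting and $\mathbb{P}[\Omega] > 0$.

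\bigskip \noindent
\textbf{Step 2: Bessel sandwich and transfer of the known asymptotics.}
On $\Omega$, for each $\epsilon \in (0, L_{\infty} - \sigma^{2}/2)$ there is a level $x_{\epsilon}$ with $(L_{\infty} - \epsilon)/x \leq f(x) \leq (L_{\infty} + \epsilon)/x$ for $x \geq x_{\epsilon}$, and a random time $T < \infty$ with $X(t) > x_{\epsilon}$ for all $t \geq T$. After a harmless modification of $f$ on $(0, x_{\epsilon})$, a region $X$ never revisits on $[T, \infty)$, making these drift bounds hold on all of $(0, \infty)$, the comparison theorem for one-dimensional SDEs with common diffusion coefficient $\sigma$ gives $U^{-}(t) \leq X(t) \leq U^{+}(t)$ for $t \geq T$, where $U^{\pm}$ solve $d U^{\pm} = \frac{L_{\infty} \pm \epsilon}{U^{\pm}} \, dt + \sigma \, dW_{t}$ with $U^{\pm}(T) = X(T)$; equivalently $U^{\pm} = \sigma R^{\pm}$ with $R^{\pm}$ a Bessel process of dimension $\delta^{\pm} := 2(L_{\infty} \pm \epsilon)/\sigma^{2} + 1 > 2$, so all three processes are non-explosive and $R^{\pm}$ never reaches $0$. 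Next I would invoke the classical law of the iterated logarithm for Bessel processes, $\limsup_{t \rightarrow \infty} R^{\pm}(t)/\sqrt{2 t \ln(\ln(t))} = 1$ a.s.\ (the constant is $1$ independently of the dimension): the sandwich then forces $\limsup_{t \rightarrow \infty} X(t)/\sqrt{2 t \ln(\ln(t))} = \sigma$, and since $X(t) \rightarrow \infty$ this gives (\ref{Eq:4.9}) after replacing $X$, $\sigma$ by $|X|$, $|\sigma|$. For (\ref{Eq:4.10}) I would use the Dvoretzky--Erd\H{o}s--Motoo lower-function test, $\liminf_{t \rightarrow \infty} \ln(R^{\pm}(t)/\sqrt{t})/\ln(\ln(t)) = -1/(\delta^{\pm} - 2)$ a.s.; since $\ln(U^{\pm}(t)/\sqrt{t}) = \ln \sigma + \ln(R^{\pm}(t)/\sqrt{t})$ with $\ln \sigma / \ln(\ln(t)) \rightarrow 0$, the sandwich yields $-1/(\delta^{-} - 2) \leq \liminf_{t \rightarrow \infty} \ln(X(t)/\sqrt{t})/\ln(\ln(t)) \leq -1/(\delta^{+} - 2)$, and letting $\epsilon \downarrow 0$, so that $\delta^{\pm} \rightarrow \delta$ and $\delta - 2 = 2 L_{\infty}/\sigma^{2} - 1$, collapses both bounds to $-1/(2 L_{\infty}/\sigma^{2} - 1)$.

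\bigskip \noindent
\textbf{Main obstacle.}
The delicate ingredient is the two-sided lower-function statement underlying (\ref{Eq:4.10}): establishing $\liminf_{t \rightarrow \infty} \ln(R(t)/\sqrt{t})/\ln(\ln(t)) = -1/(\delta - 2)$ for a Bessel process needs a Borel--Cantelli argument along a geometric subsequence $t_{k} = \theta^{k}$ for the ``infinitely often'' half, together with a complementary summability estimate and oscillation control of $R$ between consecutive $t_{k}$ for the ``eventually'' half; since the events $\{ R(t_{k}) \leq t_{k}^{1/2} (\ln t_{k})^{-\gamma} \}$ are not independent, one must lean on the Markov property and on sharp tail bounds for the law of $R(t)^{2}/t$ (a scaled chi-square whose density near the origin behaves like $x^{\delta/2 - 1}$). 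By contrast, the auxiliary facts --- that the finite random time $T$ affects no $\limsup$ or $\liminf$, that modifying $f$ below $x_{\epsilon}$ is legitimate because $X$ stays above $x_{\epsilon}$ after $T$, and that the Bessel constants depend continuously on $\delta$ so that the limit $\epsilon \downarrow 0$ is valid --- are routine.
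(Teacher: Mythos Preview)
The paper does not prove this theorem at all: it is quoted as a result of Appleby \& Wu (2009), and the $\blacksquare$ appears immediately after the statement with no proof environment. The theorem serves only as background material en route to the paper's own Theorem~3.6 (the BGC mapping), so there is nothing in the paper to compare your proposal against.

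That said, your sketch is mathematically sound and is close in spirit to the original Appleby--Wu argument. Your Step~1 is the standard Feller boundary classification via the scale function, and the key observation that $s'(y)\sim y^{-2L_\infty/\sigma^2}$ is integrable at $+\infty$ exactly when $L_\infty>\sigma^2/2$ is correct. Your Step~2 sandwich between scaled Bessel processes $U^\pm=\sigma R^\pm$ of dimensions $\delta^\pm=2(L_\infty\pm\epsilon)/\sigma^2+1$ is legitimate because the diffusion coefficient is the common constant $\sigma$, so the Ikeda--Watanabe comparison theorem applies; the identification $\delta-2=2L_\infty/\sigma^2-1$ is exactly what produces the constant in (\ref{Eq:4.10}). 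One minor point worth tightening: in the Dvoretzky--Erd\H{o}s test the critical lower envelope for a Bessel process of dimension $\delta>2$ is $\sqrt{t}\,(\ln t)^{-1/(\delta-2)}$ (not $(\ln\ln t)^{-1/(\delta-2)}$), and it is precisely this that gives $\ln(R(t)/\sqrt{t})\sim -\tfrac{1}{\delta-2}\ln\ln t$; your final formula is right, but make sure the test function in your Borel--Cantelli argument is stated with $\ln t$ rather than $\ln\ln t$. The original Appleby--Wu proof leans more directly on Motoo's integral test (the paper's Theorem~3.2) applied to the scale function of $X$ itself, rather than passing through an explicit Bessel comparison, but the two routes are essentially equivalent since the Bessel scale function is what calibrates Motoo's criterion.
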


\bigskip \noindent
Finally, the following theorem by the same authors extends their previous theorem to `standard' It\^{o} diffusions of (\ref{Eq3_4}).

\bigskip
\begin{theorem}\label{ApplebyWu2009p932} (Appleby \& Wu, 2009, p.932).
Let $X$ be the unique continuous adapted process which obeys (\ref{Eq3_2}).
Let $\Omega := \{ \omega : \lim_{ t \rightarrow \infty} X(t,\omega) = \infty \}$.
If there exist positive real numbers $L_{\infty}$ and $\sigma$ such that $L_{\infty} > \sigma^2 /2$, $f(x)$ obeys (\ref{Eq:4.8}), and $g(x)$ obeys,

\begin{equation}\label{Eq:4.22}
\forall x \in \mathbb{R}, \quad g(x) \ne 0, \quad \lim_{x \rightarrow \infty} g(x) = \sigma \in \mathbb{R}_{+},
\end{equation}

\bigskip \noindent
then $X$ satisfies (\ref{Eq:4.9}) and (\ref{Eq:4.10}). \hfill    $\blacksquare$
\end{theorem}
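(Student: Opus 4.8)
The plan is to reduce Theorem~\ref{ApplebyWu2009p932} to the preceding theorem of Appleby \& Wu (p.\,928), which already settles the case of a constant diffusion coefficient $g\equiv\sigma$, and then to carry the two envelope statements back to the original process. I would work throughout on the event $\Omega$, on which $X(t)\to\infty$, so that for every $\varepsilon>0$ the path eventually stays in a region where $|g(x)-\sigma|<\varepsilon$; observe first that $xf(x)/g^{2}(x)\to L_{\infty}/\sigma^{2}>\tfrac{1}{2}$, so the scale function $s_{c}$ of (\ref{Eq2_1}) has $\lim_{x\to\infty}s_{c}(x)<\infty$, which already yields $\mathbb{P}[\Omega]>0$ exactly as in the constant-coefficient case.

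Next I would apply the Lamperti-type change of variable $Y(t)=\phi(X(t))$ with $\phi(x)=\int_{x_{0}}^{x}\sigma/g(u)\,du$. Since $g$ is continuous, nonvanishing and tends to $\sigma>0$, the map $\phi$ is a $C^{1}$ increasing bijection of the state space with $\phi'(u)\to1$, hence $\phi(x)/x\to1$ as $x\to\infty$. By It\^{o}'s formula $Y$ obeys an autonomous equation $dY=\tilde{f}(Y)\,dt+\sigma\,dW_{t}$ with constant diffusion coefficient, and writing $x=\phi^{-1}(y)$ one finds $y\,\tilde{f}(y)=\bigl(\phi(x)/x\bigr)\bigl(\sigma/g(x)\bigr)\,xf(x)$ plus the It\^{o} correction term; because $\phi(x)/x\to1$, $\sigma/g(x)\to1$ and $xf(x)\to L_{\infty}$, the drift $\tilde{f}$ again satisfies (\ref{Eq:4.8}) with the same $L_{\infty}$, and $L_{\infty}>\sigma^{2}/2$ is inherited. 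The constant-coefficient theorem then gives (\ref{Eq:4.9}) and (\ref{Eq:4.10}) for $Y$ on $\Omega$, and these transfer to $X$ since $Y(t)/X(t)=\phi(X(t))/X(t)\to1$: the two processes share the same normalised $\limsup$, and $\ln\bigl(Y(t)/\sqrt{t}\bigr)-\ln\bigl(X(t)/\sqrt{t}\bigr)=\ln\bigl(Y(t)/X(t)\bigr)\to0$, which is $o(\ln\ln t)$, so their normalised $\liminf$ agree as well.

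The step I expect to be the main obstacle is that under the bare hypothesis (\ref{Eq:4.22}) the coefficient $g$ need not be differentiable, so the It\^{o} correction term $\tfrac{1}{2}\phi''(x)g^{2}(x)$ (a multiple of $g'$) is not even defined. To avoid differentiating $g$ I would sandwich it between continuous functions that equal $\sigma$ outside a large compact set and differ from $g$ by at most $\varepsilon$ on it, run the argument for the approximants, and let $\varepsilon\downarrow0$; equivalently, a Dambis--Dubins--Schwarz time change $M(t)=\int_{0}^{t}g(X(s))\,dW_{s}=\beta(\langle M\rangle_{t})$ with $\langle M\rangle_{t}/t\to\sigma^{2}$ on $\Omega$, together with the classical LIL for the Brownian motion $\beta$, pins the martingale part of $X$ to $|\sigma|\sqrt{2t\ln\ln t}$ directly. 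The remaining delicate point is that the drift $\int_{0}^{t}f(X(s))\,ds$ must be shown asymptotically negligible for (\ref{Eq:4.9}) while being precisely what produces the sharp exponent in (\ref{Eq:4.10}) --- indeed $X$ is asymptotically a scaled Bessel process of dimension $\delta=2L_{\infty}/\sigma^{2}+1>2$ and (\ref{Eq:4.10}) is its Dvoretzky--Erd\H{o}s lower envelope --- so the $\limsup$ and $\liminf$ estimates have to be bootstrapped, first from the crude transience bound obtained via $s_{c}$ and then closed with the sharp envelopes.
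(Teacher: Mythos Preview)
The paper does not supply a proof of this theorem: it is quoted verbatim from Appleby \& Wu (2009) as background, with the $\blacksquare$ placed at the end of the statement itself and no \texttt{proof} environment following. There is therefore nothing in the present paper to compare your argument against; the authors simply invoke the result on their way to the BGC theorem.

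On its own terms your plan is the natural one and is close in spirit to what Appleby \& Wu actually do: reduce the non-constant diffusion coefficient to a constant one and then appeal to the $g\equiv\sigma$ case. Your Lamperti change of variable $Y=\phi(X)$ with $\phi'(x)=\sigma/g(x)$ is exactly the right device, and you have correctly flagged the one real technical issue, namely that (\ref{Eq:4.22}) imposes no smoothness on $g$, so $\phi''$ and the It\^{o} correction need not exist pointwise. Either of your two proposed remedies is viable; the Dambis--Dubins--Schwarz route is cleaner here because it bypasses differentiation of $g$ entirely and feeds directly into Motoo's criterion, which is how Appleby \& Wu organise their argument. The heuristic identification of $X$ with an asymptotic Bessel process of dimension $\delta=2L_{\infty}/\sigma^{2}+1$ for the $\liminf$ statement is also correct and is what underlies (\ref{Eq:4.10}) in their paper.
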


\bigskip \noindent
Having reviewed all the current research that relates $f(x)$ and $g(x)$ in the most time dependent setting of $(X,t)$, we now define the SDE of BGC stochastic processes.

\bigskip \noindent
\begin{definition}\textbf{(SDE of BGC Stochastic Process)}. For a complete filtered probability space $(\Omega, \mathcal{F}, \{ \mathcal{F} \}_{t \geq 0}, \mathbb{P})$ and a BGC function $\Psi (x) : \mathbb{R} \rightarrow \mathbb{R}$, $\forall x \in \mathbb{R}$, then the corresponding BGC It\^{o} diffusion is expressed as,

\bigskip \noindent
\begin{equation}
   \begin{array}{rcl}
      dX & = & \Big( f(X,t) \, dt + g(X, t) \, dW_t \Big) - \sgn \Big[ f(X,t) \, dt + g(X, t) \, dW_t \Big] \Psi (X, t) \, dt \\
           & = & \Big( f(X,t) \, dt + g(X, t) \, dW_t \Big) - \sgn [X, t] \Psi (X, t) \, dt \\
           & = & \Big( f(X,t)  -  \sgn[X, t] \Psi (X, t)  \Big) \, dt + g(X, t) \, dW_t, \label{Eq:BGC}
   \end{array}
\end{equation}

\bigskip \noindent
where $\sgn[x]$ is defined in the usual sense as,

\bigskip \noindent
\begin{eqnarray*}
   \sgn [x] & = &
{
   \begin{cases}
\displaystyle  \phantom{-}1 &, \quad x > 0 \\
\displaystyle  \phantom{-}0 &, \quad x = 0 \\
\displaystyle                   -1 &, \quad x < 0
   \end{cases}},
\end{eqnarray*}

\bigskip \noindent
and $f(X, t)$ and $g(X, t)$ are convex functions. \hfill    $\blacksquare$
\end{definition}

\bigskip \noindent
Now, the unconstrained stochastic process $X(t)$ is bounded by,

\bigskip 
\[
- \sqrt{2t \, \ln(\ln(t))} \leq X(t) \leq \sqrt{2t \, \ln(\ln(t))}.
\]

\bigskip \noindent
At a high level, the BGC stochastic process $\overline{X(t)}$ is bounded by either,

\bigskip 
\[
- \sqrt{2t \, \ln(\ln(t))} + \Phi(t) \leq \overline{X(t)} \leq \sqrt{2t \, \ln(\ln(t))} -  \Phi(t),
\]

\bigskip \noindent
for some function $\Phi(t)$ or by,

\bigskip 
\[
- \Gamma (t) \leq \overline{X(t)} \leq \Gamma (t), 
\]

\bigskip \noindent
for some other function $\Gamma (t)$ where,

\bigskip 
\[
 \big| \Gamma (t) \big|  \leq \big| \sqrt{2t \, \ln(\ln(t))}  +  \Phi(t) \big|.
\]

\bigskip \noindent
We will thus need to derive either $\Phi(t)$ or $\Gamma (t)$.
The simplest possible theorem for BGC is to derive a new theorem from theorem \ref{ApplebyWu2009p932}.

\bigskip
\begin{theorem}
\textbf{(BGC Mapping for LIL)}.
Let $X(t)$ be the unique continuous adapted process which obeys (\ref{Eq3_2}) and $\overline{X(t)}$ be the corresponding BGC process.
If there exist positive real numbers $L_{\infty}$ and $\sigma$ such that $L_{\infty} > \sigma^2 /2$, $f(x)$ obeys (\ref{Eq:4.8}) and $g(x)$ obeys (\ref{Eq:4.22}), then defining $F(x)$ and $G(x)$ as BGC versions of $f(x)$ and $g(x)$ as,

\begin{equation}\label{Corollary_Aldo_1}
F(x) := f(x) - \sgn[x]\Psi(x), \quad G(x) := g(x) - \sgn[x]\Psi(x),
\end{equation}

\bigskip \noindent
where,

\begin{equation}\label{Corollary_Aldo_2}
F(x): \mathbb{R} \rightarrow \mathbb{R}, \quad G(x): \mathbb{R} \rightarrow \mathbb{R}, \quad \forall x \in \mathbb{R},
\end{equation}

\bigskip \noindent
implies that $\overline{X(t)}$ also satisfies (\ref{Eq:4.9}) and (\ref{Eq:4.10}).
\end{theorem}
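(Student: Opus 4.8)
The plan is to exploit the structural robustness of Theorem \ref{ApplebyWu2009p932}: the BGC process $\overline{X(t)}$ is itself a geometric It\^{o} diffusion of exactly the form (\ref{Eq3_2}), only with $f$ and $g$ replaced by $F$ and $G$. So rather than re-doing any LIL estimates, the strategy is to verify that the pair $(F,G)$ inherits hypotheses (\ref{Eq:4.8}) and (\ref{Eq:4.22}) from $(f,g)$, and then quote Theorem \ref{ApplebyWu2009p932} verbatim for $\overline{X(t)}$. First I would record that, by (\ref{Eq:BGC}), (\ref{Corollary_Aldo_1}) and (\ref{Corollary_Aldo_2}), the process $\overline{X(t)}$ is the unique continuous adapted solution of
\[
d\overline{X(t)} = F\big(\overline{X(t)}\big)\,dt + G\big(\overline{X(t)}\big)\,dW_t, \qquad \overline{X(0)} = x_0,
\]
with $F,G:\mathbb{R}\rightarrow\mathbb{R}$. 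Existence and uniqueness follow from the standing convexity (hence local Lipschitz plus linear growth) hypotheses on $f,g$ together with the fact that the BGC correction $\sgn[x]\Psi(x)$ is a bounded perturbation away from a neighbourhood of the origin; the jump of $\sgn$ at $0$ is harmless since we will work on the event where $\overline{X(t)}\to\infty$.

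The crux is the transfer of the two limit conditions. Put $\overline{\Omega} := \{\omega : \lim_{t\to\infty}\overline{X(t,\omega)} = \infty\}$. On the asymptotics $x\to\infty$ relevant to (\ref{Eq:4.8})--(\ref{Eq:4.22}) one has $\sgn[x]=1$, so
\[
x\,F(x) = x\,f(x) - x\,\Psi(x), \qquad G(x) = g(x) - \Psi(x).
\]
Since $x f(x)\to L_\infty$ and $g(x)\to\sigma$ by hypothesis, the conclusion of Theorem \ref{ApplebyWu2009p932} will carry over provided the constraint term is controlled enough that $x\Psi(x)\to\ell$ and $\Psi(x)\to p$ for finite constants $\ell,p$, with the BGC parameters $\overline{L_\infty}:=L_\infty-\ell$ and $\overline{\sigma}:=\sigma-p$ still obeying $\overline{\sigma}\neq 0$ and $\overline{L_\infty}>\overline{\sigma}^2/2$. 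Under the BGC modelling assumptions this amounts to showing the perturbation is, to leading order at the $\sqrt{t}$ scale, asymptotically negligible; then (\ref{Eq:4.8}) holds for $F$ with constant $\overline{L_\infty}$ and (\ref{Eq:4.22}) holds for $G$ with limit $\overline{\sigma}$. In the natural normalisation $\Psi(x)\to 0$, $x\Psi(x)\to 0$ one gets $\overline{L_\infty}=L_\infty$ and $\overline{\sigma}=\sigma$, so nothing changes at all.

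With (\ref{Eq:4.8}) and (\ref{Eq:4.22}) verified for $(F,G)$, I would then apply Theorem \ref{ApplebyWu2009p932} directly to $\overline{X(t)}$: it gives $\mathbb{P}[\overline{\Omega}]>0$ together with, almost surely on $\overline{\Omega}$,
\[
\limsup_{t\to\infty}\frac{\big|\overline{X(t)}\big|}{\sqrt{2t\,\ln(\ln(t))}} = |\overline{\sigma}|, \qquad \liminf_{t\to\infty}\frac{\ln\!\big(\overline{X(t)}/\sqrt{t}\big)}{\ln(\ln(t))} = -\,\frac{1}{\,2\overline{L_\infty}/\overline{\sigma}^2 - 1\,},
\]
which are precisely (\ref{Eq:4.9}) and (\ref{Eq:4.10}) for the BGC process (with its own parameters, and literally unchanged under the normalisation above). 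The main obstacle is exactly the middle step: controlling $\sgn[x]\Psi(x)$ uniformly enough that $\lim_{x\to\infty}xF(x)$ and $\lim_{x\to\infty}G(x)$ exist and still satisfy $\overline{L_\infty}>\overline{\sigma}^2/2$ --- everything else is bookkeeping plus the appeal to Theorem \ref{ApplebyWu2009p932}. A secondary technical nuisance is the discontinuity of $\sgn$ at the origin inside the coefficients $F,G$, which has to be dealt with (e.g.\ via the convexity hypothesis, or a localisation/Zvonkin-type argument) when invoking well-posedness of the BGC SDE.
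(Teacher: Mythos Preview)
Your proposal follows essentially the same strategy as the paper: recognise that $\overline{X(t)}$ is itself an It\^{o} diffusion with coefficients $F,G$ in place of $f,g$, check enough regularity for well-posedness (you via local Lipschitz plus a remark on the $\sgn$ discontinuity, the paper via a Lipschitz local continuity condition and restricting the domain to $\mathbb{R}_-\cup\mathbb{R}_+$), and then invoke Theorem~\ref{ApplebyWu2009p932} directly for the pair $(F,G)$. The one notable difference in emphasis is that you explicitly isolate the transfer of the asymptotic hypotheses (\ref{Eq:4.8}) and (\ref{Eq:4.22}) to $F,G$ --- i.e.\ the need for $x\Psi(x)$ and $\Psi(x)$ to have finite limits compatible with $\overline{L_\infty}>\overline{\sigma}^2/2$ --- whereas the paper simply ``substitutes'' $F,G$ into Theorem~\ref{ApplebyWu2009p932} without spelling this out; your version is therefore more careful on precisely the point that carries the analytic content, while the paper spends its effort instead on an informal ``iterative'' justification for why the BGC perturbation should be viewed as affecting $g$ as well as $f$.
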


\begin{proof}
\noindent
In general, for linear combinations of drift functions $f_i (x)$ and diffusion functions $g_j (x)$ respectively, $\forall a_i, b_j, n, m \in \mathbb{N}$, then,

\[
\begin{array}{lcr}
a_1 f_1 (x) + \cdot \cdot \cdot + a_n f_n (x)  & = &  f_{n+1} (x) \\
b_1 g_1 (x) + \cdot \cdot \cdot + b_m g_m (x) & = &  g_{m+1} (x)
\end{array},
\]

\bigskip \noindent
are also drift and diffusion functions respectively.
However, one can not simply end here since, after some minor abuse of notation, at sucessive time instances,

\bigskip \noindent
\begin{equation}
   \begin{array}{rcl}
      X_1 & = & \Big( f(X_0)  -  \underbrace{\sgn[X_0]}_{\text{Impacts $X_1$}} \underbrace{\Psi (X_0)}_{\text{Impacts $X_1$}}  \Big) \, dt + g(X_0) \, dW_0, \nonumber \\
      X_2 & = & \Big( \underbrace{f(X_1)  -  \sgn[X_1] \Psi (X_1)}_{\text{Impacts $X_2$}}  \Big) \, dt + \underbrace{g(X_1)}_{\text{Impacts $X_2$}} \, dW_1, \nonumber \\
\boldsymbol{\cdot \cdot \cdot}     & \boldsymbol{\cdot \cdot \cdot} & \boldsymbol{\cdot \cdot \cdot} \nonumber
   \end{array}
\end{equation}

\bigskip \noindent
shows that where BGC initially is acting on the drift at $t=0$, it then impacts the diffusion (and drift) at $t=1$, and so on, due the the iterative nature of It\^{o} diffusions.
This means our proof must not only consider the impact of BGC on $f(x)$ but also on $g(x)$.
Now, (\ref{Corollary_Aldo_1}) is true for the trivial case if $\sgn[x]=1$ (i.e. there is no sign oscillation or sign switching).
In all other cases of $\sgn[x]$, as the BGC It\^{o} process $\overline{X(t)}$ oscillates above and below the $X = 0$ axis, it simply requires us to state that,

\begin{equation}\label{Corollary_Aldo_3}
F(x): \mathbb{R}_{-} \cup \mathbb{R}_{+} \rightarrow \mathbb{R}, \quad 
G(x): \mathbb{R}_{-} \cup \mathbb{R}_{+} \rightarrow \mathbb{R}.
\end{equation}

\bigskip \noindent
The only step remaining is to show that $F(x)$ and $G(x)$ guarantee that there exists a unique global continuous solution to the BGC SDE by verifying that these functions satisfy Lipschitz local continuity
.
The Lipschitz local continuity condition is satisfied if there exists a positive real constant $\Lambda_1$ such that $\forall x_1, x_2 \in \mathbb{R}$,

\begin{equation*}
\begin{array}{rcl}
\displaystyle \big|F(x_{1}, t)-F(x_{2}, t) \big| & \leq & \Lambda_1 \big| x_{1}-x_{2} \big|, \quad  t \geq 0, \\ \\
\displaystyle \big|G(x_{1}, t)-G(x_{2}, t) \big| & \leq & \Lambda_1 \big| x_{1}-x_{2} \big|, \quad t \geq 0.
\end{array}
\end{equation*}

\bigskip \noindent
Substituting (\ref{Corollary_Aldo_3}) for $f(x)$ and $g(x)$ into theorem \ref{ApplebyWu2009p932} completes the proof.
\end{proof}

\bigskip \noindent
For $F(x)$ and $G(x)$ to guarantee that there exists a unique {\it \textbf{strong}} global continuous solution to the BGC SDE, we would also need to establish that the Linear growth bound condition is satisfied, which is met if there exists a positive real constant $\Lambda_2$ such that $\forall x \in \mathbb{R}$,

\begin{equation}\label{lineargrowthbound}
\displaystyle \big|F(x, t) \big|^2 + \big| G(x, t) \big|^2 \leq \Lambda_2 \Big( 1 + \big| x \big|^2 \Big), \quad t \geq 0,
\end{equation}

\bigskip \noindent
which is dependent on the actual nature of $\Psi (X,t)$.
If $\Psi (X,t)$ does not satisfy this condition, then the solution might explode in finite time.

\bigskip
\section{Results and Discussion}

\noindent
To compliment the Methodology section, $\Psi (X, t)$ in (\ref{Eq:BGC}) was instantiated in this Results section to be $\Psi (X, t) := \frac{x^2}{\beta}$, where $\beta = 100$ and $x \in \mathbb{R}$, as the simplest form of BGC stochastic processes, so that a geometric context can be provided.
(\ref{Eq:BGC}) was simulated over 1,000 time steps in Figure \ref{Fig:1000Simulationsof1000Step1DimensionalItoDiffusionsWithWithoutBGC} for with and without BGC for various values of $\mu$ and $\sigma$.
We note that whilst $\mu$ and $\sigma$ are used here instead of the more general $f(X,t)$ and $g(X,t)$, we have actually catered for this by using $\Psi(X,t)$ rather than the autonomous constant $\Psi$.

\bigskip \noindent
From Figure \ref{Fig:1000Simulationsof1000Step1DimensionalItoDiffusionsWithWithoutBGC}, we can see in four different scenarios how BGC constrains the blue unconstrained It\^{o} process into the constrained red It\^{o} process, pulling it back the further it deviates from the origin in either extreme of $X$.

\bigskip \noindent
To extend this analysis further, (\ref{Eq:BGC}) was simulated over 1,000 time steps for a total of 1,000 paths and is shown in Figure \ref{Fig:DiscretizationDetailsofSimulationsduetoBGCwithnoDiffusion} with positive, zero, negative drift and with varying diffusion values.

\bigskip \noindent
From Figure \ref{Fig:DiscretizationDetailsofSimulationsduetoBGCwithnoDiffusion}, we see that most noticably in (d), that there are `gaps'.
To show this discretization in further detail, we reduced the marker size, not included the lines that connect the markers and increased their transparency.
One can also see that the discretization polarizes the markers along the direction of the drift.
To elaborate on this hidden barrier discretization phenomenon further, the densities of the simulations, before and after BGC were plotted in Figure \ref{Fig:ImpactofBGContheDistributionofItoDiffusionswithnoDiffusionTerm}.

\begin{figure}
   \centering
       \includegraphics[scale=0.63]{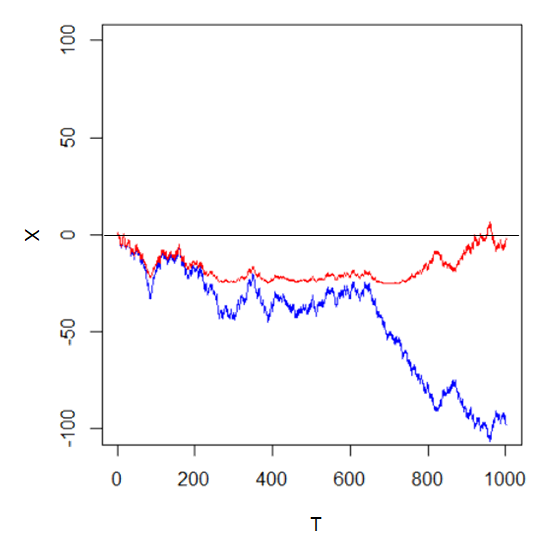}
      \includegraphics[scale=0.63]{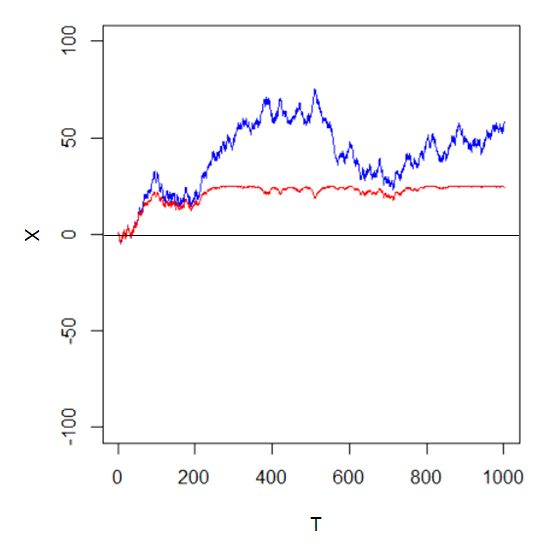}\\
\textbf{\footnotesize \noindent
(a). $\boldsymbol{\mu = -0.05, \sigma = 1}$  $\quad \quad \quad \quad \quad \quad \quad \quad \quad$ (b). $\boldsymbol{\mu = 0.05, \sigma = 1}$
}\\
      \includegraphics[scale=0.63]{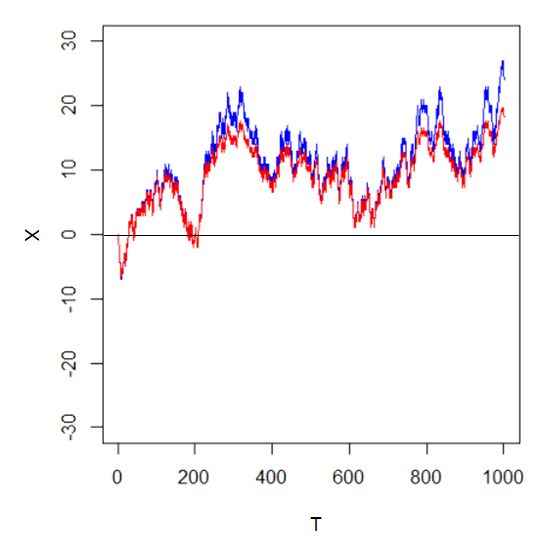}
      \includegraphics[scale=0.63]{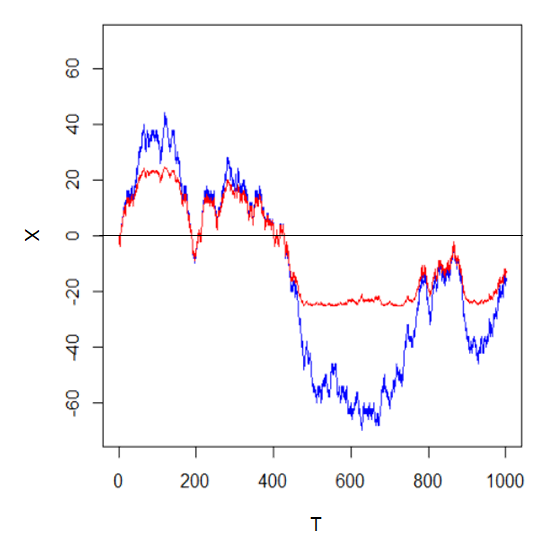}\\
\textbf{\footnotesize \noindent
(c). $\boldsymbol{\mu = 0, \sigma = 1}$  $\quad \quad \quad \quad \quad \quad \quad \quad \quad \quad$  (d). $\boldsymbol{\mu = 0, \sigma = 2}$
}\\
   \caption{4 Simulations of 1,000-Step 1-Dimensional It\^{o} Diffusions, With \& Without BGC}
   \label{Fig:1000Simulationsof1000Step1DimensionalItoDiffusionsWithWithoutBGC}
\textbf{\footnotesize \noindent
Blue = Without BGC $\quad$, $\quad$ Red = With BGC}\\
\flushleft
\textbf{\footnotesize \noindent
(a). {\it \textbf{Negative}} $\boldsymbol{\mu}$ shows most constraining occurs the {\it \textbf{lower}} $\boldsymbol{X_t}$ moves from origin.\\
(b). {\it \textbf{Positive}} $\boldsymbol{\mu}$ shows most constraining occurs the {\it \textbf{higher}} $\boldsymbol{X_t}$ moves from origin.\\
(c). {\it \textbf{Small}} $\boldsymbol{\sigma}$ shows most constraining occurs the further $\boldsymbol{X_t}$ moves from origin.\\
(d). {\it \textbf{Larger}} $\boldsymbol{\sigma}$ shows most constraining occurs the further $\boldsymbol{X_t}$ moves along $\boldsymbol{X}$.\\
}
\end{figure}

\bigskip \noindent
From Figure \ref{Fig:ImpactofBGContheDistributionofItoDiffusionswithnoDiffusionTerm}, we see that the hidden barrier becomes obvious due to the vertical peak(s) suddenly dropping to zero.
We also see some ``sinusoidal'' accumulations, especially in (a) and (b), which correlate with the discrete bands in Figure \ref{Fig:DiscretizationDetailsofSimulationsduetoBGCwithnoDiffusion}.
Even if the minima of these peaks are non-zero, due to the transparency feature of Figure \ref{Fig:DiscretizationDetailsofSimulationsduetoBGCwithnoDiffusion}, the relative colouring can imply that BGC causes regions where there are no simulation path values, even though some values that are represented or detected.

\begin{figure}
   \centering
   \includegraphics[scale=0.43]{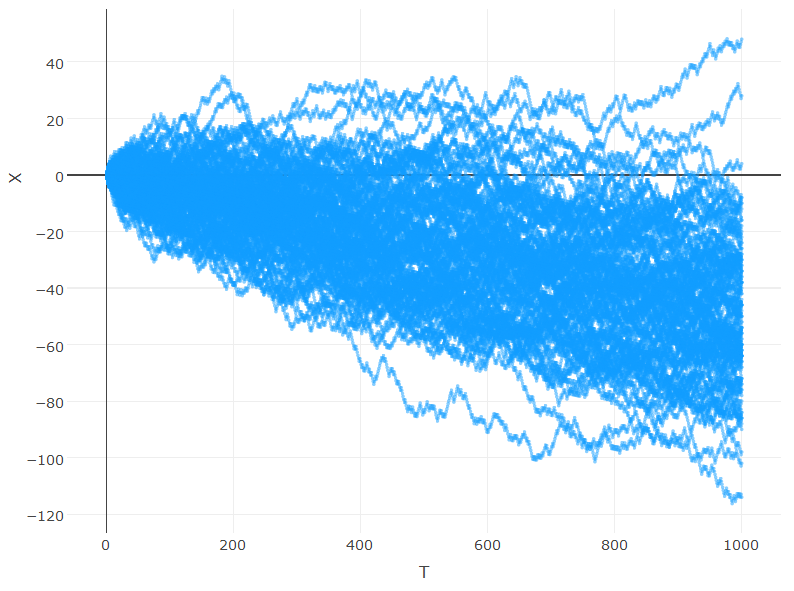}
   \includegraphics[scale=0.43]{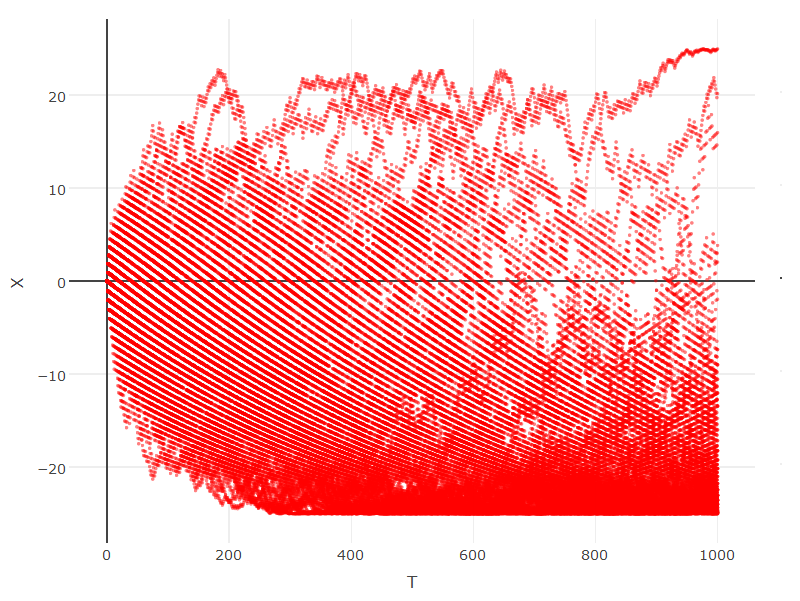}\\
\textbf{\footnotesize \noindent
(a). $\boldsymbol{\mu = -0.05,  \sigma = 1}$ without BGC \quad \quad \quad \quad \quad \quad (b). $\boldsymbol{\mu = -0.05,  \sigma = 1}$ with BGC
}\\
  \includegraphics[scale=0.43]{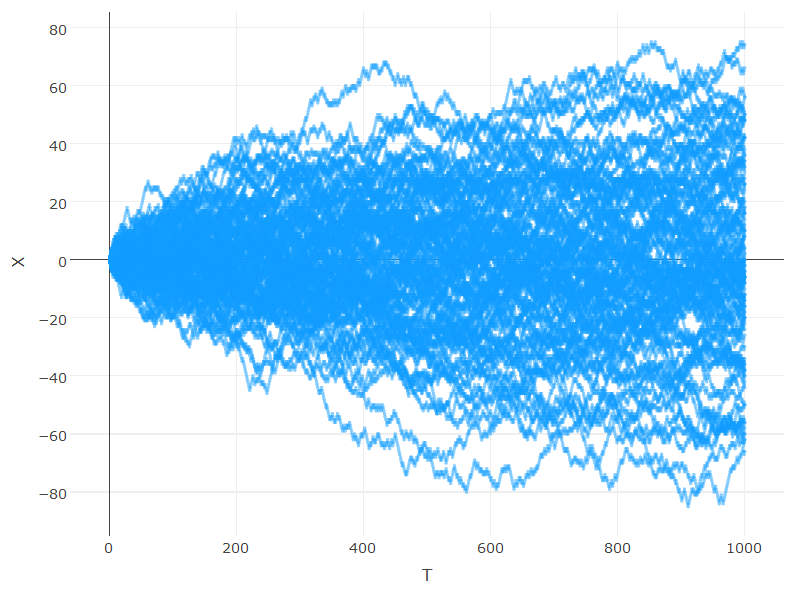}
   \includegraphics[scale=0.43]{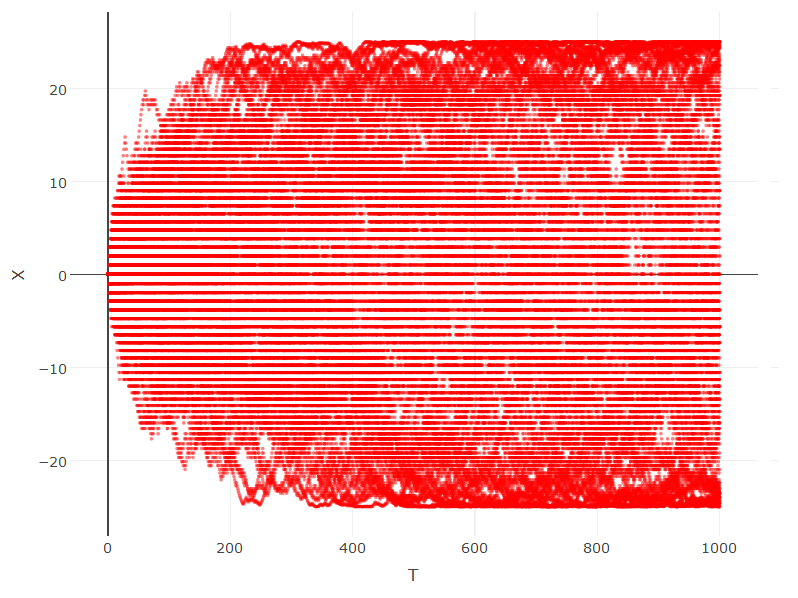}\\
\textbf{\footnotesize \noindent
(c). $\boldsymbol{\mu = 0,  \sigma = 1}$ without BGC \quad \quad \quad \quad \quad \quad (d). $\boldsymbol{\mu = 0, \sigma = 1}$ with BGC
}\\
    \includegraphics[scale=0.43]{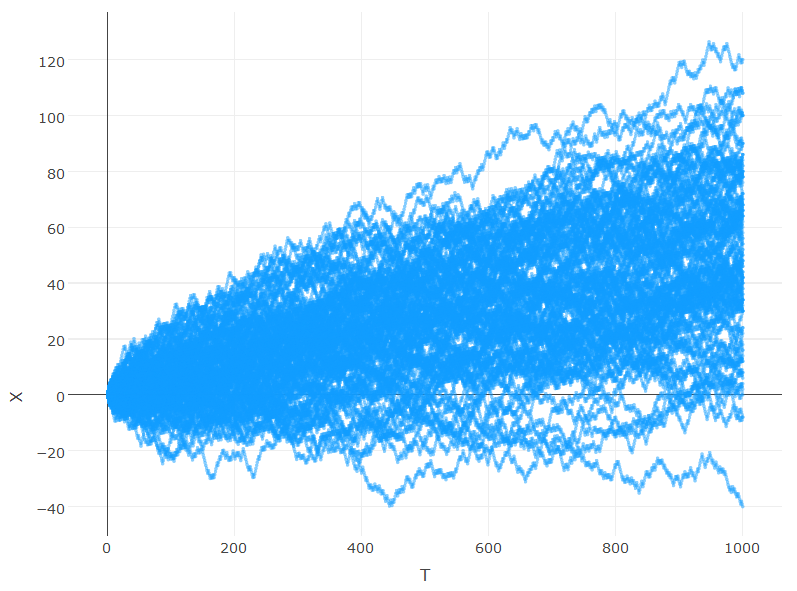}
   \includegraphics[scale=0.43]{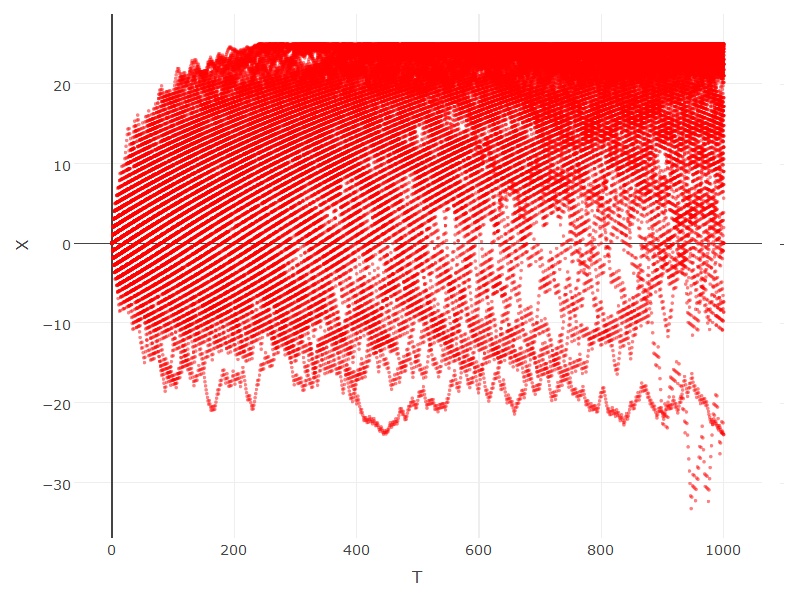}\\
\textbf{\footnotesize \noindent
(e). $\boldsymbol{\mu = 0.05,  \sigma = 1}$ without BGC \quad \quad \quad \quad \quad \quad (f). $\boldsymbol{\mu = 0.05,  \sigma = 1}$ with BGC
}\\
   \caption{Discretization Details of Simulations due to BGC with no Diffusion}
   \label{Fig:DiscretizationDetailsofSimulationsduetoBGCwithnoDiffusion}
\flushleft
\textbf{\footnotesize \noindent
(a). {\it \textbf{Negative}} drift is constrained in (b) the more it deviates away from the origin, causing {\it \textbf{downward}} diagonal bands to form.\\
(c). {\it \textbf{Zero}} drift is constrained in (d) the more it deviates away from the origin, causing horizontal bands to form.\\
(e). {\it \textbf{Positive}} drift is constrained in (f) the more it deviates away from the origin, causing {\it \textbf{upward}} diagonal bands to form.\\
(b), (d) \& (f): Exponentially `attracted' to hidden horizontal (reflective) barrier(s).}
\end{figure}

\begin{figure}
   \centering
   \includegraphics[scale=0.6]{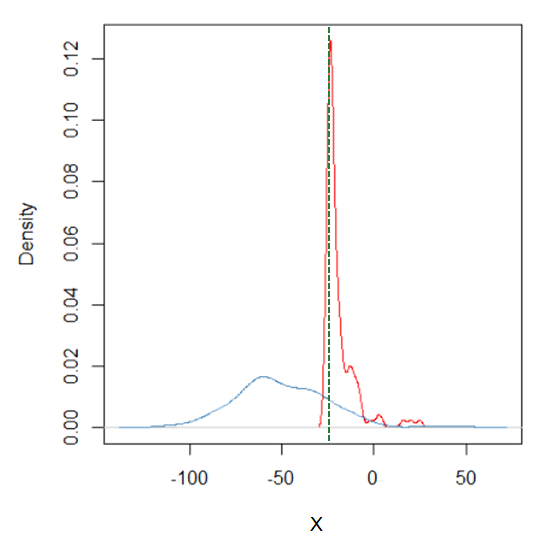}
   \includegraphics[scale=0.6]{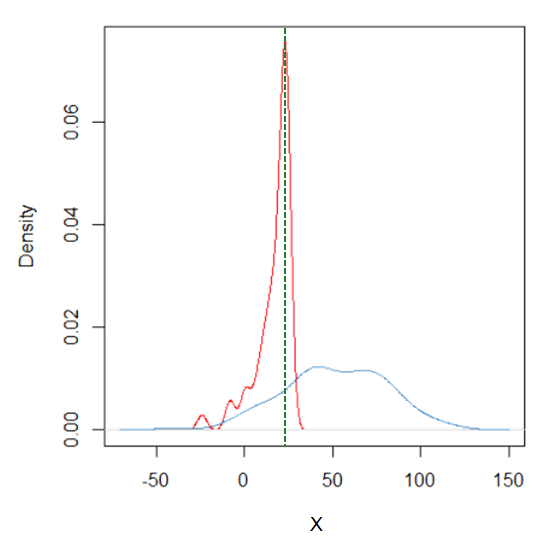}\\
\textbf{\footnotesize \noindent
(a). $\boldsymbol{\mu = -0.05, \sigma= 1}$  \quad \quad \quad \quad \quad \quad \quad \quad \quad (b). $\boldsymbol{\mu = 0.05, \sigma= 1}$}\\
   \includegraphics[scale=0.6]{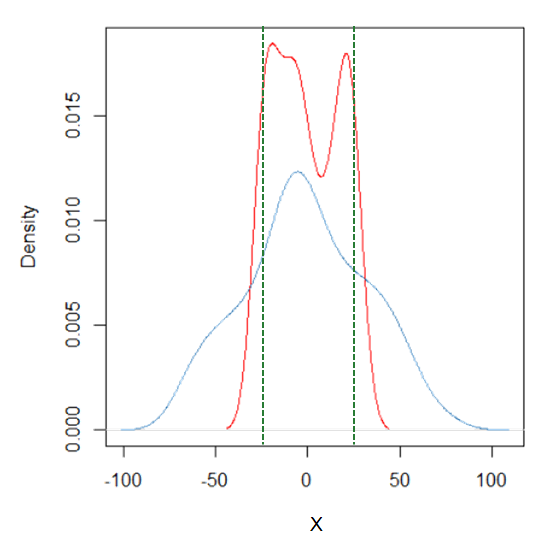}\\
\textbf{\footnotesize \noindent
(c). $\boldsymbol{\mu = 0, \sigma= 1}$}\\
   \caption{Impact of BGC on the Distribution of It\^{o} Diffusions with no Diffusion Term}
   \label{Fig:ImpactofBGContheDistributionofItoDiffusionswithnoDiffusionTerm}
\textbf{\footnotesize \noindent
Blue = original density $\quad$ , $\quad$ Red = BGC density $\quad$ , $\quad$ Green = peak.}\\
\flushleft
\textbf{\footnotesize \noindent
(a). BGC squeezes the {\it \textbf{negative}} skew distribution to {\it \textbf{positive}} direction due to impact of hidden BGC barrier.\\
(b). BGC squeezes the {\it \textbf{positive}} skew distribution to the {\it \textbf{negative}} direction due to impact of hidden BGC barrier.\\
(c). BGC squeezes the {\it \textbf{zero}} skew distribution to both the {\it \textbf{negative}} and {\it \textbf{positive}} direction due to impact of hidden BGC barrier.\\
Notice the `sinusoidal' nature of BGC distributions, which was being exhibited in Figure \ref{Fig:DiscretizationDetailsofSimulationsduetoBGCwithnoDiffusion} as banding or discretization.\\
$\quad$\\
$\quad$\\
$\quad$\\
$\quad$\\
$\quad$\\
}
\end{figure}

\bigskip \noindent
Having held mainly $\sigma$ insignificant by setting $\sigma = 1$ whilst varying $\mu$, we now hold $\mu$ to be insignificant by setting $\mu = 0$ whilst varying $\sigma$, as shown in Figure \ref{Fig:DiscretizationDetailsofSimulationsduetoBGCwithnoDrift}.

\begin{figure}
   \centering
   \includegraphics[scale=0.25]{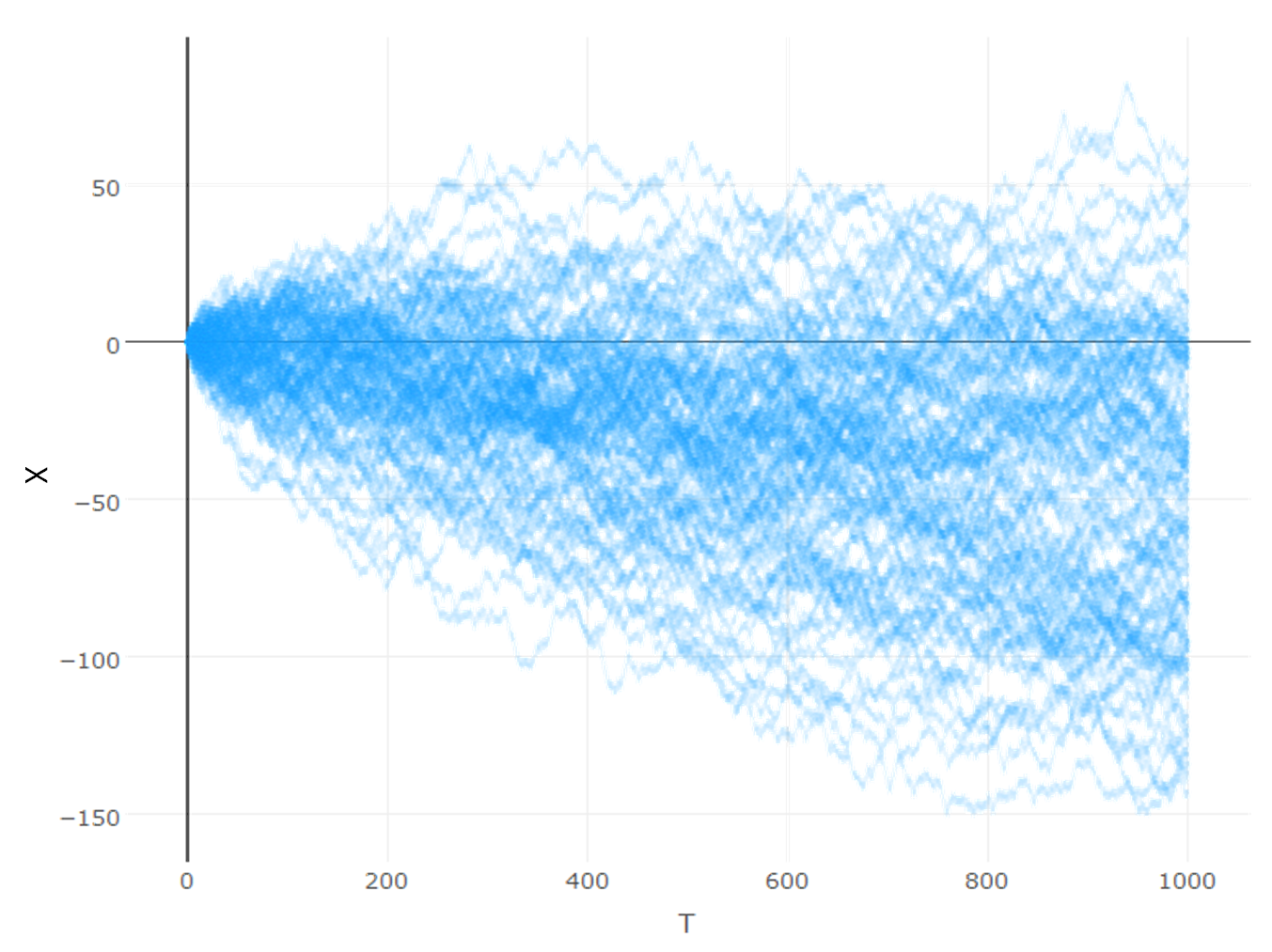}
   \includegraphics[scale=0.25]{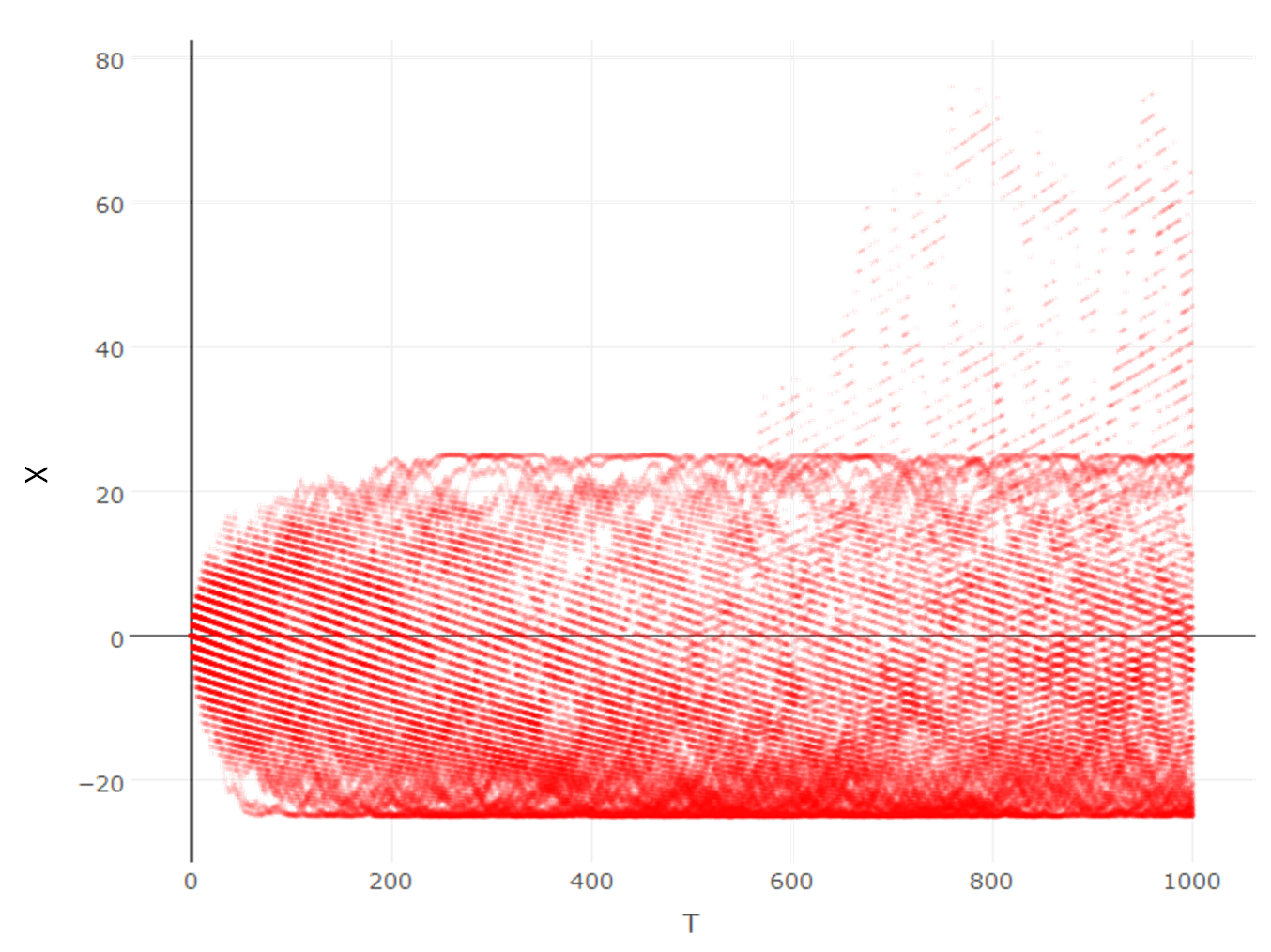}\\
\textbf{\footnotesize \noindent
(a). $\boldsymbol{\mu = 0, \sigma =  -1.5}$ without BGC \quad \quad \quad \quad (b). $\boldsymbol{\mu = 0, \sigma =   -1.5}$ with BGC
}\\
   \includegraphics[scale=0.25]{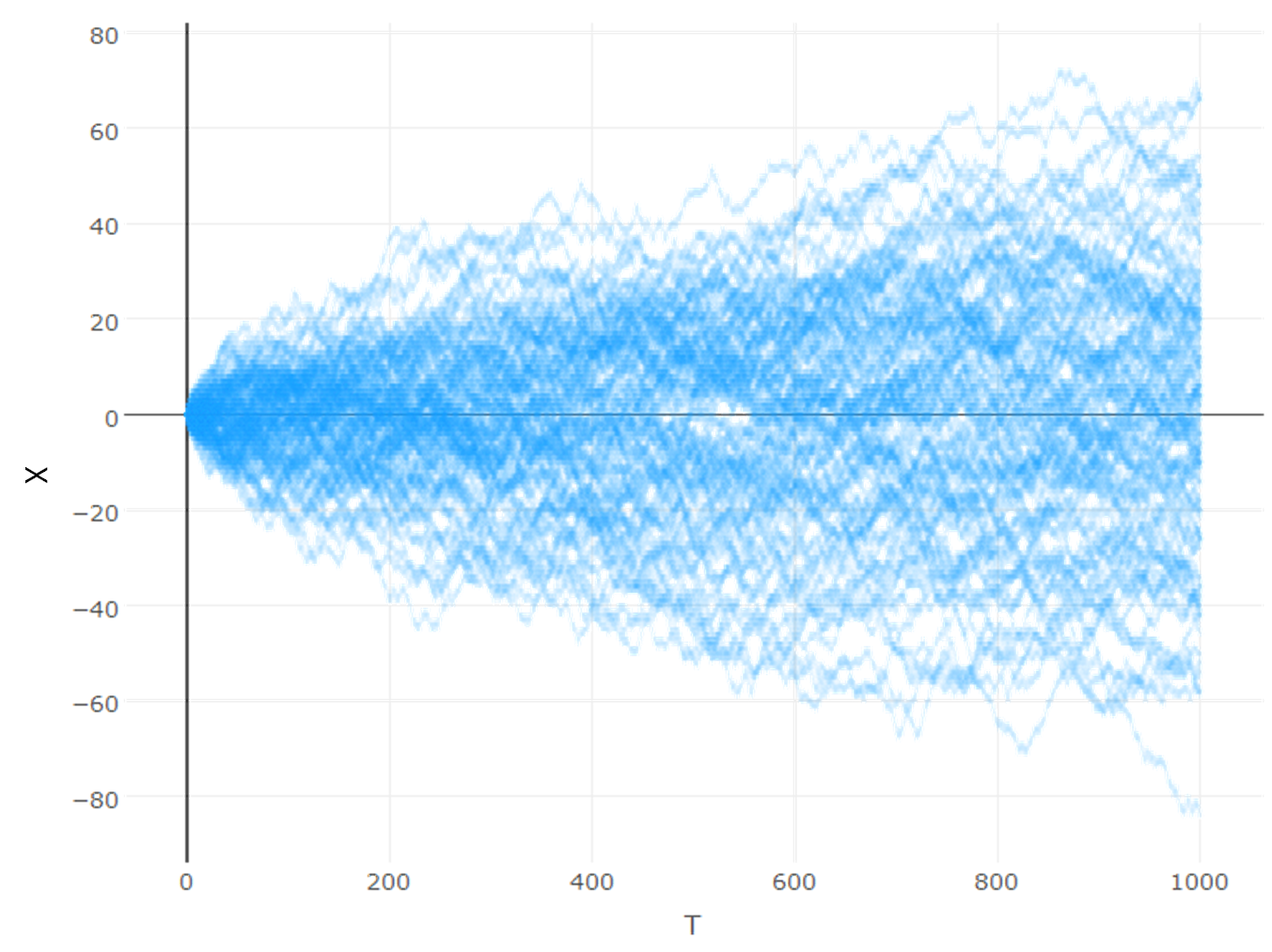}
   \includegraphics[scale=0.25]{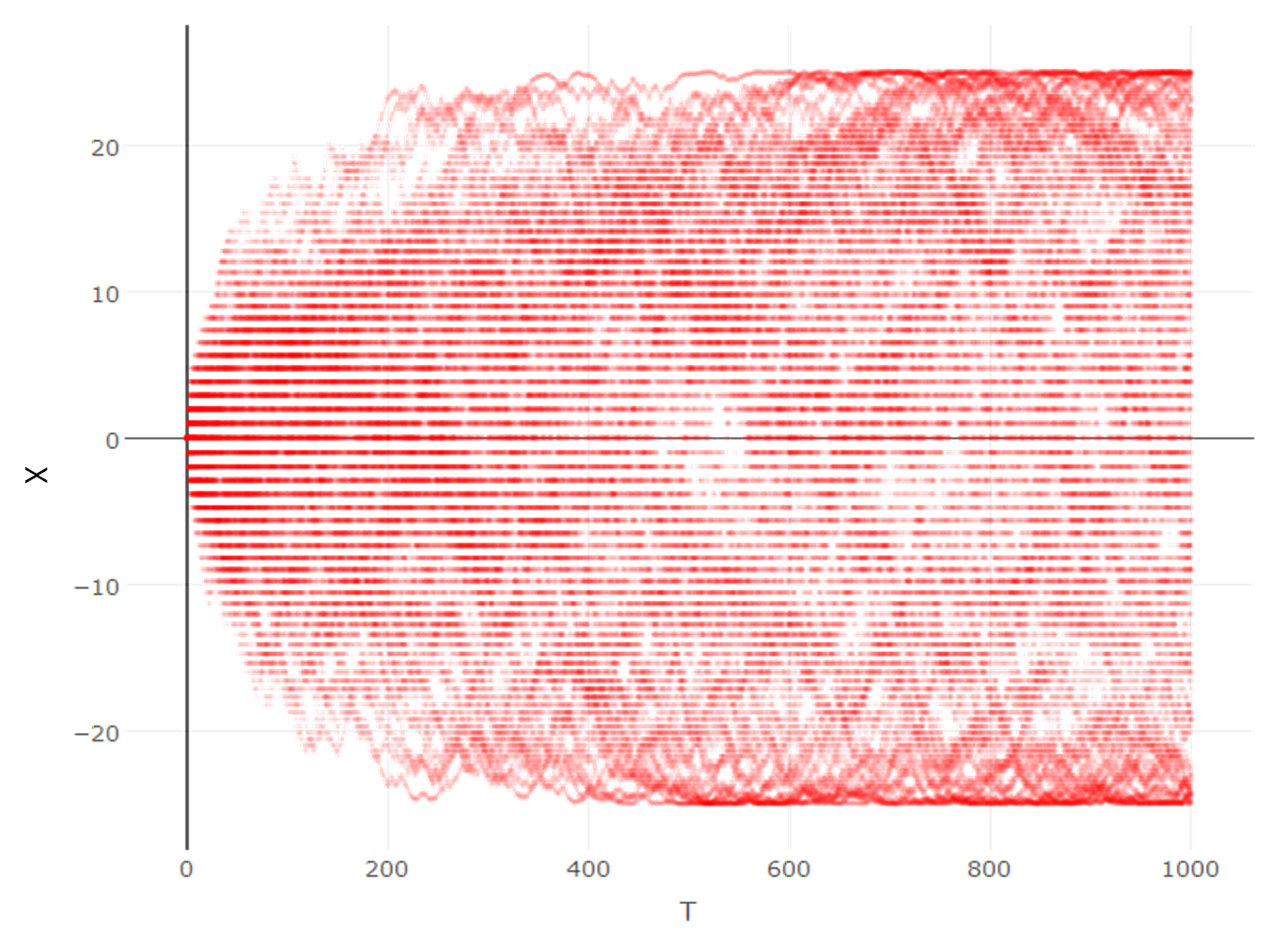}\\
\textbf{\footnotesize \noindent
(c). $\boldsymbol{\mu = 0, \sigma =  1 }$ without BGC \quad \quad \quad \quad (d). $\boldsymbol{\mu = 0, \sigma =  1 }$ with BGC
}\\
   \includegraphics[scale=0.25]{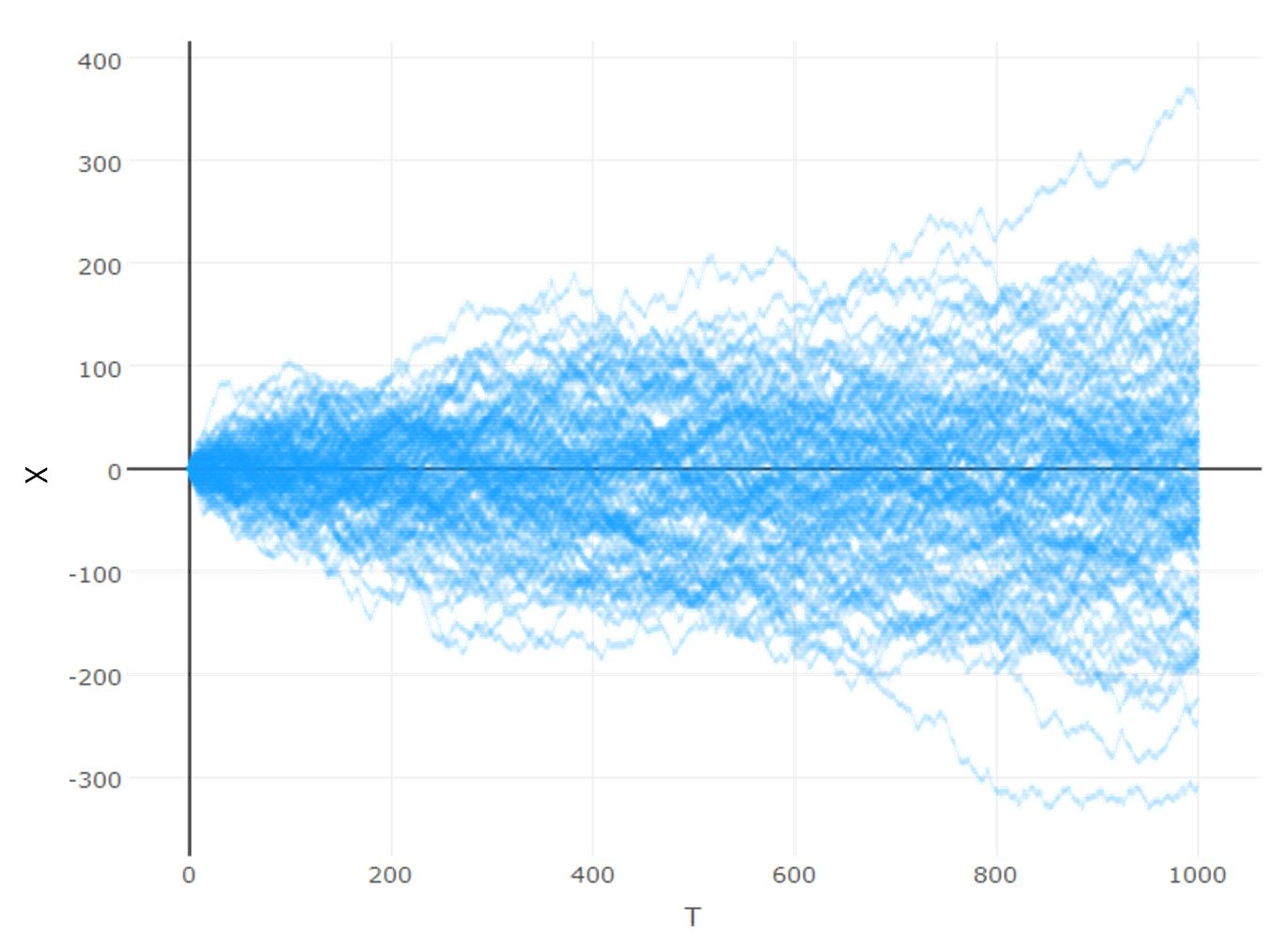}
   \includegraphics[scale=0.25]{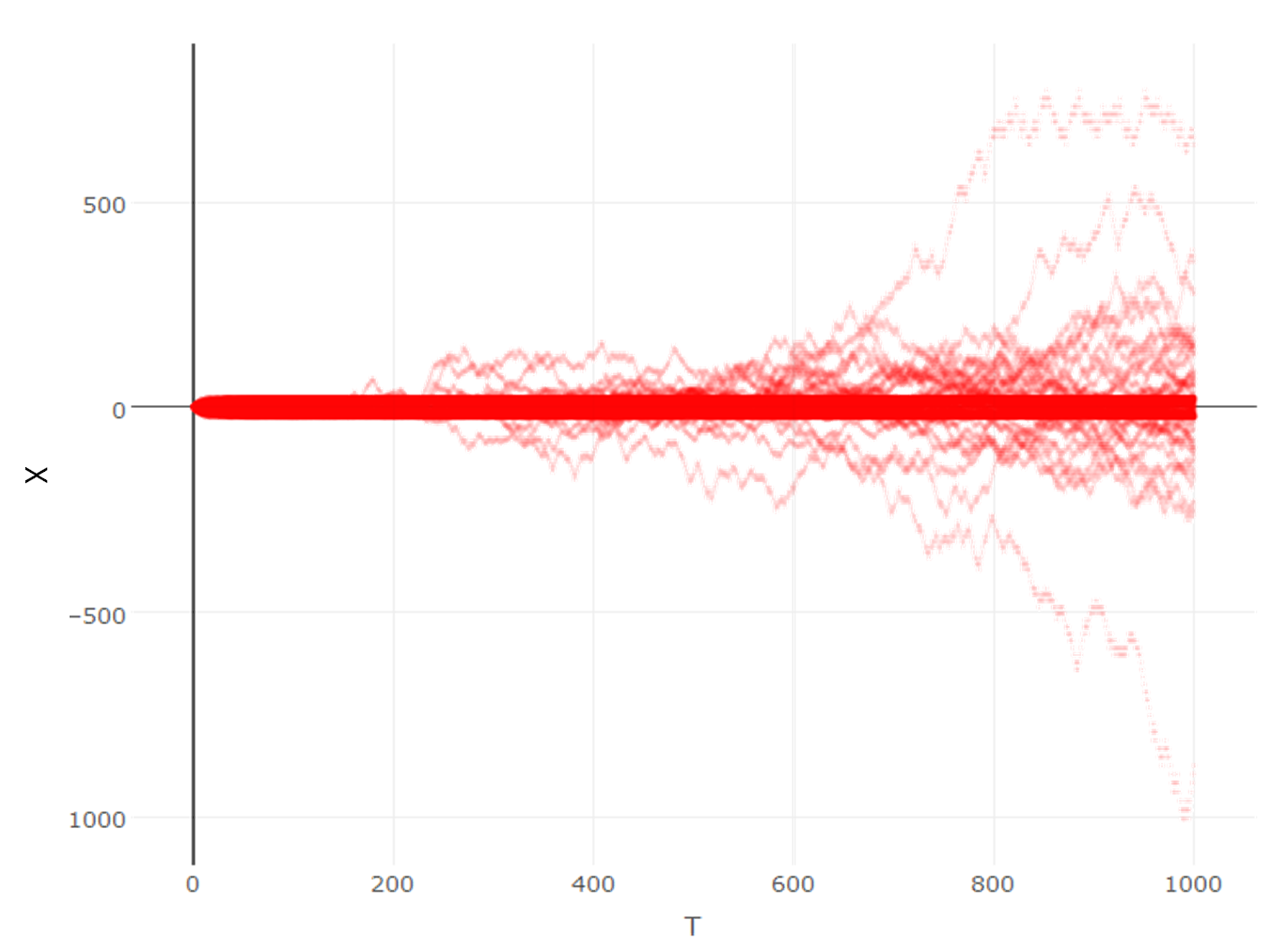}\\
\textbf{\footnotesize \noindent
(e). $\boldsymbol{\mu = 0, \sigma =  3.5 }$ without BGC \quad \quad \quad \quad (f). $\boldsymbol{\mu = 0, \sigma = 3.5 }$ with BGC
}\\
   \caption{Discretization Details of Simulations due to BGC with no Drift}
   \label{Fig:DiscretizationDetailsofSimulationsduetoBGCwithnoDrift}
\flushleft
\textbf{\footnotesize \noindent
(a). {\it \textbf{Negative}} diffusion is constrained in (b) the more it deviates away from the origin, causing {\it \textbf{downward}} diagonal bands to form, with {\it \textbf{some}} It\^{o} diffusions escaping the hidden reflective barrier since $\boldsymbol{\sigma}$'s effect is {\it \textbf{overtaken}}.\\
(c). {\it \textbf{Neutral}} diffusion is constrained in (d) the more it deviates away from the origin, causing horizontal bands to form, with {\it \textbf{no}} It\^{o} diffusions escaping the hidden reflective barrier since $\boldsymbol{\sigma}$'s impact is the {\it \textbf{smallest}}.\\
(e). {\it \textbf{Positive}} diffusion is constrained in (f) the more it deviates away from the origin, causing {\it \textbf{upward}} diagonal bands to form, with {\it \textbf{some}} It\^{o} diffusions escaping the hidden reflective barrier since $\boldsymbol{\sigma}$'s effect is {\it \textbf{overtaken}}.\\
We also note that in (b), (d) and (f), the transitions are exponentially `attracted' to the hidden (reflective) barrier.}
\end{figure}

\bigskip \noindent
From Figure \ref{Fig:DiscretizationDetailsofSimulationsduetoBGCwithnoDrift}, we can see that the BGC constrains the It\^{o} diffusions in the similar way as Figure \ref{Fig:DiscretizationDetailsofSimulationsduetoBGCwithnoDiffusion}, but as time continues to increase (or pass), then the impact of BGC diminishes.
This means that the reflective barrier nature of BGC becomes overtaken by the It\^{o} process itself when the magnitude of $\sigma$ forces the process to escape the barrier.

\bigskip \noindent
To statistically assess the impact of BGC on the resulting distributions, the corresponding densities were plotted in Figure \ref{Fig:ImpactofBGContheDiffusionAlteredDistributionofItoDiffusions}, the most noticable being (b).

\begin{figure}
   \centering
   \includegraphics[scale=0.6]{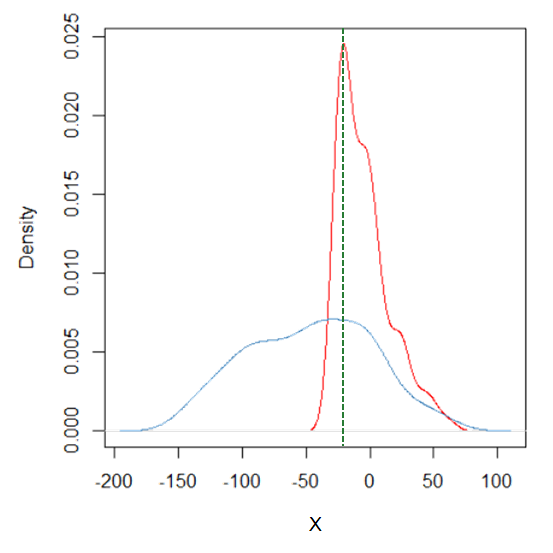}
   \includegraphics[scale=0.6]{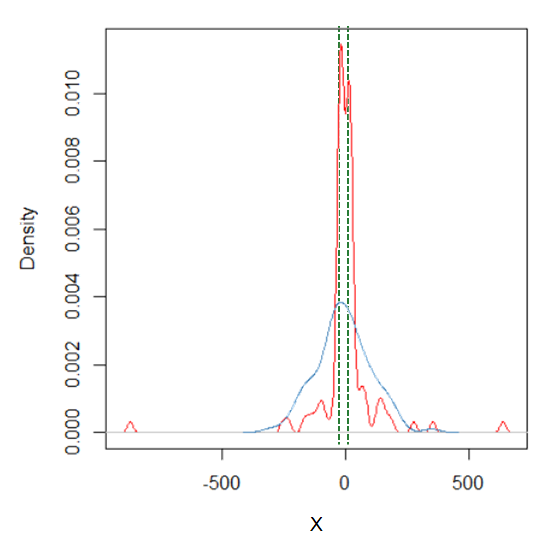}\\
\textbf{\footnotesize \noindent
(a). $\boldsymbol{\mu = 0, \sigma = -1.5}$  \quad \quad \quad \quad \quad \quad \quad \quad \quad (b). $\boldsymbol{\mu = 0, \sigma = 3.5}$}\\
   \includegraphics[scale=0.6]{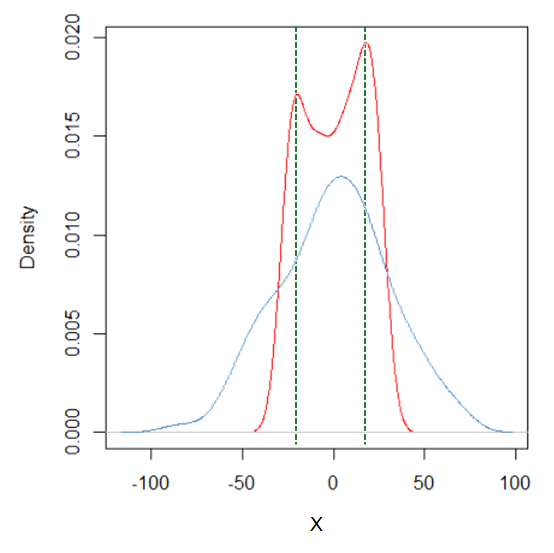}\\
\textbf{\footnotesize \noindent
(c). $\boldsymbol{\mu = 0, \sigma = 1}$}\\
   \caption{Impact of BGC on the Diffusion Altered Distribution of It\^{o} Diffusions}
   \label{Fig:ImpactofBGContheDiffusionAlteredDistributionofItoDiffusions}
\textbf{\footnotesize \noindent
Blue = original density $\quad$ , $\quad$ Red = BGC density $\quad$ , $\quad$ Green = peak.}\\
\flushleft
\textbf{\footnotesize \noindent
(a). BGC squeezes the {\it \textbf{negative}} skew distribution to the {\it \textbf{positive}} direction due to the impact of the hidden BGC barrier.\\
(b). BGC squeezes the {\it \textbf{positive}} skew distribution to the {\it \textbf{negative}} direction due to the impact of the hidden BGC barrier.\\
(c). BGC squeezes the {\it \textbf{zero}} skew distribution to both the {\it \textbf{negative}} and {\it \textbf{positive}} direction due to the impact of the hidden BGC barrier.
}
\end{figure}

\bigskip
\section{Conclusions}

\noindent
This paper has introduced the novel theory of Bi-Directional Grid Constrained (BGC) stochastic processes, where the further an It\^{o} diffusion drifts away from the origin, then the more it will be constrained.
The net effect of the BGC operates as a reflecting horizontal hidden barrier, from which we derived a theorem for the logarithmic bounds of the resulting envelope.
We have also shown how BGC effectively discretizes the It\^{o} diffusion paths into discrete bands, which is surprising.
We have also shown that if the diffusion parameter $\sigma$ is relatively large in comparison to the BGC $\Psi (X, t)$ function's magnitude, then the It\^{o} diffusions can escape or be transmitted from the hidden barrier.
These results are infinitely scalable to $n$-Dimensional It\^{o} diffusions, although the proof of which is reserved for future research.

\bigskip \noindent
There are immediate applications of this research, not only in finance (Taranto \& Khan, 2020) \cite{TarantoKhan2020a}, \cite{TarantoKhan2020b}, \cite{TarantoKhan2020c}, but also in many other fields.
Due to our It\^{o} process formulation, without requiring the typically high amount of parameters found in fields such as Physics and Economics, we find that our formulation supports greater portability to many fields.
One such typical application of BGC can be in the monetary policies of increasing quantitative easing as it can reduce or constrain the growth of unemployment or alternatively the growth of inflation, but after some time, the stimulus can end up having little or no effect (or worse, have an adverse effect).
It can also polarize or discretize the associated It\^{o} process that it is trying to constrain, such as displacing or marginalizing individuals or groups of individuals.
Future research on this topic can include the geometric classification of BGC functions and the estimation of the first passage time (FPT) for when the resulting BGC hidden barrier is hit.

\bigskip
\bibliographystyle{amsplain}

\bigskip
\bigskip \noindent
\begin{center}




\bigskip
\thanks{The first author was supported by an Australian Government Research Training Program (RTP) Scholarship.\\
\bigskip
We would like to thank Prof. Laura Sacerdote of Universit\`{a} Degli Studi Di Torino, for her invaluable advice on refining the early stages of the paper.
We would also like to thank the independent referees of this journal for their endorsements and suggestions.}
\end{center}

\end{document}